\newtheorem{remark}{Remark}
\newtheorem{theorem}{Theorem}[section]
\newtheorem{lemma}{Lemma}[section]
\newtheorem{definition}{Definition}[section]
\newtheorem{proposition}{Proposition}[section]
\newcommand{\im}{{\rm{Im}} }
\journal{Applied and  Computational Harmonic Analysis}
\begin{document}

\begin{frontmatter}


\fntext[Nota1]{The first author was partially supported by  MinCyT, ANPCyT PICT2014-1480 and UBA, UBACyT 20020170100430BA.}
\fntext[Nota2]{The second author was partially supported by PIO CONICET-UNGS 144-20140100011-CO.}

\title{A generalized version of the 2-microlocal frontier prescription}

\author[label1]{Ursula Molter \fnref{Nota1}}
\author[label2]{Mariel Rosenblatt \fnref{Nota2}}
\address[label1]{FCEyN, Universidad de Buenos Aires, and IMAS, CONICET-UBA, Argentina}
\address[label2]{IDH, Universidad Nacional de General Sarmiento, Buenos Aires, Argentina}
\ead{umolter@dm.uba.ar (Ursula Molter), mrosen@campus.ungs.edu.ar}

\begin{abstract}
The characterization of  local regularity is a fundamental issue in signal and image processing, since it contains relevant information about the underlying systems. The 2-microlocal frontier, a monotone concave downward curve in $\mathbb {R}^2$,  provides a complete and profound classification of  pointwise singularity.

In \cite{Meyer1998}, \cite{GuiJaffardLevy1998} and \cite{LevySeuret2004} the authors show the following: given a monotone concave downward curve in the plane it is possible to exhibit one function (or distribution) such that its 2-microlocal frontier al $x_0$ is the given curve.

In this work we are able to unify the previous results, by obtaining a large class of functions (or distributions), that includes the three examples mentioned above, for which the 2-microlocal frontier is the given curve. The three examples above are in this class.

Further, if the curve is a line, we characterize all the functions
whose 2-microlocal frontier at $x_0$ is the given line.

\end{abstract}
\begin{keyword}
2-microlocal analysis \sep pointwise regularity \sep regularity exponents \sep wavelet analysis.
\MSC[2010] 25A16 \sep 42C40 \sep 60G35. 
\end{keyword}

\end{frontmatter}


\section{Introduction}
\label{Intro}

Singularities of functions or signals are points at which the function lacks regularity. The detection and characterization of singularities is an important topic in signal processing, since they contain significant information about the phenomena.

There are several types of singularities that can be illustrated in known examples, e.g. the function $f(x)=|x-x_{0}|^{\alpha}$,  $ 0 < \alpha < 1$, has a  {\em cusp} type singularity at $x_0$, which is non oscillating .

In contrast, the function  $ f(x)=|x-x_{0}|^{\alpha} \sin  \left(  |x-x_{0}|^{-\beta} \right) $, with $ 0 < \alpha < 1$ and   $\beta> 0 $, has a {\em chirp} type singularity at $ x_0 $, with an oscillatory behaviour around $ x_0 $.

{\em Oscillating} and {\em non oscillating} are a first and clear distinction among singularities. However, the intuitive notion of {\em oscillation} is not enough to characterize more complex structures.
For example the function
$ f(x)=|x-x_{0}| \sin  \left(  |x-x_{0}|^{-1} \right) +|x-x_{0}|^{3/2} $ is oscillating at $x_0$ but does not have a chirp type singularity.

From a mathematical point of view, it is important to characterize the different singularities. Classical functions such as the  Weierstrass function \cite{Daoudi1998}, Riemann functions and other examples \cite{JaffardMeyer1996, Jaffard1997} present cusp or oscillating type singularities at almost every point.

Likewise, the relevance of characterizing singularities is also important in applications, because it is fundamental to accurately describe natural and social phenomena.
In fact, many signals from natural and social phenomena (EEG, ECG, data from financial markets, among others) usually present cusp type singularities. But it is also possible to detect oscillating singularities in signals from physical and natural phenomena, for example, echolocation waves emitted by bats \cite{Kopsinis_echobat_2010} and  hydrodynamic turbulence phenomena \cite{Abry2011} present oscillating singularities. Recently, it has been confirmed that these type of oscillating structures appear at  gravitational waves, recorded by the laser interferometry gravitational-wave observatories LIGO (located in the USA) and Virgo (located in Europe),  during the merger of two black holes in 2015 \cite{AbbottLIGO2015} and from a binary neutron star inspiral in 2017 \cite {AbbottLIGO2017}.

The complete characterization of a pointwise singularity requires several parameters. One of the most commonly used parameter in signal processing is the {\em pointwise  H\"older exponent}. However, the information provided by this exponent is insufficient to distinguish oscillating singularities from non-oscillating ones. To complement this information, other parameters have been proposed: {\em the local H\"older exponent} \cite {SeuretLevy2002}, the {\em oscillation and chirp} exponents \cite{JaffardMeyer1996}, \cite{Arneodo1998} and the { \em weak scaling } exponent \cite {Meyer1998}. Recently, a novel quantification of local regularity based on $p$-exponents, for $p>0$, has been presented in \cite{Jaffard2016}.

Under some global regularity assumptions of the function  $f$, the classical regularity exponents can all be extracted from a concave downward curve in $\mathbb {R}^2$: the {\em {2-microlocal frontier}} at $x_0$. This curve is defined by means of the 2-microlocal spaces ${C}_{x_{0}}^{s,s'}$, with the  parameters $s,s'\in\mathbb{R}$.

2-microlocal spaces were introduced by J.M Bony \cite{Bony1984} to examine the propagations  of singularities for semi-linear hyperbolic equations. They are defined as functional spaces embedded in the space of tempered distributions $\mathcal{S}'(\mathbb {R}) $ and their fundamental property  is that they are stable under the action of differential and integral operators \cite {Bony1984}, that is  
$$ f \in {C}_{x_{0}}^{s,s'} \quad \Longleftrightarrow \quad f^{(n)} \in {C}_{x_{0}}^{s-n,s'} \;\;\; \forall n \in \mathbb{N}.$$

The original definition of  ${C}_{x_{0}}^{s,s'}$ is associated to conditions on Littlewood-Paley decompositions of tempered distributions. In \cite {Jaffard1991}, S. Jaffard reformulates these conditions by means of the wavelet transform, providing another characterization of the 2-microlocal spaces of J.M Bony.

In this work we will use the wavelet approach to define these spaces.
Recall that a function $\psi\in L^2(\mathbb{R})$ is called a \textit{wavelet} if $\{\psi_{j,k}= 2^{j/2} \psi(2^{j} x - k)\}_j,k\in\mathbb{Z}$ forms an orthonormal basis of  $L^2(\mathbb{R})$. In that case the wavelet coefficients of $f\in L^2(\mathbb{R})$ are

\begin{equation}
c_{j,k} = 2^{j/2}\langle f,\psi_{j,k}\rangle \;\; \text{ with } \; \; \psi_{j,k}= 2^{j/2} \psi(2^{j} x - k).
\end{equation}

To extend the wavelet expansion to the space of tempered distributions we will require that the {\em mother wavelet} belongs to the space of Schwartz $ \mathcal {S} (\mathbb {R}) $. In fact it is not necessary to have $\psi \in \mathcal {S} (\mathbb {R}) $ when looking at the local behaviour of $f$. It is enough that $\psi$ has sufficient vanishing moments and its first derivatives are of fast decay.

We then have the following definition  (see  \cite{Jaffard1991} and the references cited therein for details about this statement),

\begin{definition}
	For a tempered distribution $f$ we say that 
	
	\begin{center}
		$f \in {C}_{x_{0}}^{s,s'}$ if and only if there exits $C > 0$ such that $$|c_{j,k}| \leq C 2^{-j s}\;(1 + | k - 2^{j} x_{0} | )^{-s'}$$
	\end{center}
	 for all $ j $ and $k \in {\mathbb Z}.$
\end{definition}

However, when analysing the local behaviour of $ f $ at $ x_0$  it is not necessary that $ f $ is defined at infinity. We will use the \textbf{local} $ {C}_{x_{0}}^ {s, s '} $ spaces, defined by  Y. Meyer and S. Jaffard \cite {JaffardMeyer1996, Meyer1998}. They are embedded in the space of distributions $ \mathcal{D}'(V) $, where $ V $ is an open set  containing $x_0$.

\begin{definition}
	Let  $V$ be an open neighbourhood of $x_0 $ and  $f\in \mathcal{D}'(V)$. We say that 
	$f$ belongs to the \textbf{local}  ${C}_{x_{0}}^{s,s'}  $if there exists an open neighbourhood of $x_0$ 
 $V_0\subsetneq V$  and  $F\in  \textbf{ global }{C}_{x_{0}}^{s,s'}$ such that
	$$f=F\;\; \text{in} \;\;V_0.$$
	
\end{definition}

The wavelet characterization of the  \textbf {local} $ {C}_{x_{0}}^{s, s'} $ spaces is summarized in the following equivalent definition. From now on we will denote with ${C}_{x_{0}}^{s,s'}$ the {\em local} 2-microlocal space.

\begin{definition}
	\label{pertenencia_2ml}
	$f\in {C}_{x_{0}}^{s,s'}$  if and only if there exists $C > 0$ such that
	\begin{eqnarray}
	|c_{j,k}| \leq C 2^{-j s}\;(1 + | k - 2^{j} x_{0} | )^{-s'}
	\end{eqnarray}
	for all $ j $ and $k \in {\mathbb Z}$ such that  $j\geq 0$ and $|\frac{ k}{2^j} - x_{0}| < 1$.
\end{definition}

\begin{remark}
	\label{wavelet_condiciones}
	If the mother wavelet $\psi$ satisfies that $\psi$ has  N vanishing moments,
	and that its first $r$ derivatives have fast decay,  then the  previous definition is valid if $ s, s' $ verify:

	\begin{equation}\label{condiciones_psi}
	r+s+\inf\ \left\{s',1\right\}>0 \text{ and } N>\sup\left\{s,s+s'\right\}.
	\end{equation}
	
In this work  we will consider $\psi$ to be in the Schwartz class for simplicity, for example the Meyer wavelet which has  all its vanishing moments and all its derivatives of fast decay. Consequently, the conditions required in (\ref {condiciones_psi}) are satisfied for all pairs $ (s, s') $.

	\end{remark}

It should be noted that for some  $(s,s')$ there also exist characterizations of local 2-microlocal spaces in time domain \cite{Kolwankar2002, Seuret2003,LevySeuret2004, Echelard2007}. Further, the notion of 2-microlocal regularity has been extended recently to the stochastic setting \cite{Herbin2009,BalancaHerbin2012,Balanca2014}.

In order to give a geometric description of the singular behaviour of a function,  in \cite {Meyer1998} Y. Meyer defines a convex set  $D(f,x_0)\in \mathbb{R}^2$  which is called the \textit{2-microlocal domain} of $ f $ at $ x_0 $:

\begin{equation}\label{dom2ml}
D(f,x_0)= \{(s,s'): f\in  {C}_{x_{0}}^{s,s'}\}.
\end{equation}

\begin{definition}
	\label{def_frontera}

	The boundary of the set $ D (f, x_0) $  is the 2-microlocal frontier of $ f $ at $ x_{0} $, which can be defined by the parametrization
	$$ s'\longmapsto \sup \{s : f \in {C}_{x_{0}}^{s,s'} \}.$$

	Denoting by $ \sigma = s + s' $, the 2-microlocal frontier of $ f $ at $ x_{0} $ is the concave downward and decreasing curve, in the $(\sigma, s) $ plane:
	\begin{eqnarray}
	S(\sigma) &=& \sup \{s : f \in {C}_{x_{0}}^{s,\sigma-s} \}.
	\end{eqnarray}
	\end{definition}
Under  global regularity conditions on $ f $, all the regularity exponents can be captured from the curve $ {S} (\sigma) $, obtaining a complete description of the local regularity. For example, if $S(0)>0$ and $ f $ is uniformly H\"older, i.e.,
$f$ is bounded function  and  there exists $C$ such for all $x,y$:
 
either
\begin{center}
	$\left|f(x)-f(y)\right|<C\left|x-y\right|^\varepsilon$ \;\;\;  \text{ with }$0 <\varepsilon<1$, 
\end{center}
or, if $\varepsilon>1$, $\varepsilon \in\mathbb{Z}$, there exists  all the derivates of $f$ of order less than $\varepsilon$  and  $C$ such that

\begin{center}
	$\left|\partial^{\left[\varepsilon\right]} f(x)-\partial^{\left[\varepsilon\right]} f(y)\right|<C\left|x-y\right|^{\varepsilon-\left[\varepsilon\right]},$  \;\;\; 
\end{center}

then the pointwise H\"older exponent at $x_0$ is  $S(0)$; the local H\"older exponent at $x_0$ is $ \sigma$ such that $ S (\sigma) = \sigma $; the chirp exponent at $x_0$ is the additive inverse of the slope of the asymptote to the left of $ S (\sigma) $, the oscillation exponent is the additive inverse of the slope of tangent line to the left of $ S (\sigma) $ at  $(0, S(0))$; and the weak scaling exponent at $x_0$ is   $\displaystyle{\lim_{\sigma\rightarrow -\infty} S(\sigma)}$.

It is therefore relevant to design prototype functions with a predetermined singularity structure. In this sense, in \cite {Jaffard1995, Daoudi1998}, using different methods, several functions with  prescribed pointwise H\"older exponent are constructed.
For example, given  a function $ h: [0,1] \longrightarrow [0,1] $ which is the lower limit of a sequence of continuous functions, in \cite {Daoudi1998}  different functions such that their pointwise H\"older exponent is $h (x) $ are constructed. Also, in \cite {SeuretLevy2002}, given a non-negative lower semi-continuous function $\alpha_l (x)$, the authors construct a function which has $\alpha_l (x)$ as its local H\"older exponent at all $ x\in \mathbb{R}$.
Moreover, in \cite{Jaffard2000} both, the pointwise H\"older and the chirp exponents, are prescribed. More precisely, given $ h (x) $ and $ \beta_c (x) $  bounded non-negative functions defined on $ [0,1] $, which are lower limits of continuous functions, the author provides a function  $ f $ such that $ h (x) $ and $ \beta_c (x) $ are its respective pointwise H\"older and chirp exponents at $ x \in [0,1]\setminus E $, for $ E $ a set of measure 0.

In this connection, it is natural to ask whether, given a decreasing and concave  downwards curve  $ S (\sigma) $, we can construct a function or distribution $ f $ such that the 2-microlocal frontier of $ f $ at $ x_0 $ is $ S (\sigma) $. In \cite{ Meyer1998, GuiJaffardLevy1998, LevySeuret2004} the authors address this question and in each work the authors construct, using the wavelet coefficients
of $ f $, a unique function or distribution $ f $ having the predetermined  2-microlocal frontier at $x_0$.

From these results the following questions arise:

\begin{enumerate}
	\item The distributions or functions, with prescribed 2-microlocal frontier at $ x_0 $, are different in each of the three cited works. However, is there any common characteristic in the three proposed distributions? Could all of them be defined by a generic formula?
	\item Can we characterize all functions or distributions with $ S (\sigma) $ as the 2-microlocal frontier at $ x_0 $?
	
\end{enumerate}

In this work we provide answers to these questions. For the first item, if $ S (\sigma) $  is a decreasing function    that is either concave downwards with $ S'' (\sigma) <0 $ or  linear,
we determine a generic formula which includes the distributions  proposed in \cite{Meyer1998, GuiJaffardLevy1998, LevySeuret2004} as special cases. Furthermore, this generic formula provides a prototype family of functions or distributions with a specific  singularity type at $ x_0 $, that is, with a prescribed 2-microlocal frontier at $ x_0 $. For the second question we characterize completely the functions or distributions such that $S(\sigma)$ is the 2-microlocal frontier at $x_0$ for the case 
that $S(\sigma )$ is linear.

The starting point is the generalization of the result stated in \cite {GuiJaffardLevy1998}, where a distribution $ f $ with predetermined 2-microlocal frontier is constructed. It is determined by its wavelet coefficients $ c_ {j, k} $ as follows:

\begin{proposition}{\small{\cite{GuiJaffardLevy1998}}}
	
	\label{GJLV98}
	
	Let  $S (\sigma)$ be a concave downwards and decreasing function defined on $\mathbb{R}$. Assume
that $S (\sigma)$ is not a line. Then, the wavelet coefficients
	\begin{eqnarray}
	\label{formulaGJL}
	c_{j,k}=\inf_{\sigma}\left\{2^{-j S(\sigma)}\left(1+\left|k-2^j x_0\right|\right)^{S (\sigma)-\sigma}\right\}
	\end{eqnarray}
	
	define a distribution $f$,  of which the 2-microlocal frontier at $x_0$ is $S(\sigma)$.

\end{proposition}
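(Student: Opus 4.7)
My plan is to exploit the Legendre-transform structure implicit in the formula for $c_{j,k}$. The argument splits into three steps: (i) verify that the wavelet series $\sum_{j,k} c_{j,k}\psi_{j,k}$ defines a tempered distribution; (ii) show the easy inclusion $\sup\{s : f\in C_{x_{0}}^{s,\sigma-s}\}\geq S(\sigma)$; and (iii) establish the sharp reverse inequality, which is where the real work lies.

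Steps (i) and (ii) both rest on the trivial consequence of an infimum: for any fixed $\sigma_0\in\mathbb{R}$,
\[
|c_{j,k}| \;\leq\; u_{j,k}(\sigma_0) \;:=\; 2^{-jS(\sigma_0)}\bigl(1+|k-2^{j}x_0|\bigr)^{S(\sigma_0)-\sigma_0}.
\]
After a triangle inequality $1+|k-2^{j}x_0|\leq(1+|k|)(1+2^{j}|x_0|)$, this gives polynomial control in $(j,k)$, so the wavelet series converges in $\mathcal{S}'(\mathbb{R})$. The same pointwise bound is exactly the wavelet criterion of Definition~\ref{pertenencia_2ml} for $f\in C_{x_{0}}^{S(\sigma_0),\sigma_0-S(\sigma_0)}$, and varying $\sigma_0$ places the entire graph of $S$ inside the 2-microlocal domain $D(f,x_0)$.

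The hard step (iii) is to show that for every $\sigma_0$ and every $s>S(\sigma_0)$, the ratio $|c_{j,k}|/[2^{-js}(1+|k-2^{j}x_0|)^{s-\sigma_0}]$ is unbounded along a suitable sequence $(j,k_j)$. Differentiating $\log u_{j,k}(\sigma)$ in $\sigma$ and using that $S$ is not a line (so $S''<0$ where relevant), the infimum is attained at the unique $\sigma_*=\sigma_*(j,k)$ solving
\[
S'(\sigma_*) \;=\; -\frac{t}{1-t},\qquad t := \frac{\log(1+|k-2^{j}x_0|)}{j\log 2}.
\]
Set $t_0:=-S'(\sigma_0)/(1-S'(\sigma_0))$, which lies in $(0,1)$ because $S'(\sigma_0)<0$. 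For each large $j$, pick an integer $k_j$ so that $|k_j-2^{j}x_0|$ is the nearest integer to $2^{jt_0}$; this forces $t_{j,k_j}=t_0+O(1/j)$ and, by the implicit function theorem applied where $S''(\sigma_0)<0$, also $\sigma_*(j,k_j)=\sigma_0+O(1/j)$. Substituting into the explicit equality $c_{j,k_j}=u_{j,k_j}(\sigma_*)$ and expanding to first order yields
\[
c_{j,k_j}\;\asymp\;2^{-jS(\sigma_0)}\bigl(1+|k_j-2^{j}x_0|\bigr)^{S(\sigma_0)-\sigma_0},
\]
so the target ratio grows like $2^{j(s-S(\sigma_0))(1-t_0)}\to\infty$.

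The main obstacle, as I see it, is controlling the last $\asymp$: replacing $\sigma_*$ by $\sigma_0$ in exponents that themselves multiply $j$ could in principle create an unbounded multiplicative error, so one must verify that the quantities $j\,|S(\sigma_*)-S(\sigma_0)|$ and $\log(1+|k_j-2^{j}x_0|)\cdot\bigl|(S(\sigma_*)-\sigma_*)-(S(\sigma_0)-\sigma_0)\bigr|$ are both $O(1)$. This follows from the $C^1$ dependence $t\mapsto\sigma_*(t)$ guaranteed by $S''(\sigma_0)<0$, combined with the smoothness of $S$ near $\sigma_0$. A secondary, cosmetic check is that the choice $|k_j-2^{j}x_0|\asymp 2^{jt_0}<2^{j}$ keeps $k_j$ inside the admissible window $|k_j/2^{j}-x_0|<1$ demanded by Definition~\ref{pertenencia_2ml}, which is automatic since $t_0<1$.
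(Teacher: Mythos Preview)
Your proposal is correct and follows essentially the same route as the paper. The paper does not prove Proposition~\ref{GJLV98} in isolation; it obtains it as the special case $\mathcal{C}_{j,k}=\lambda_{j,k}=1$ of Theorem~\ref{teo_gral_2ml_version4}, whose proof hinges on exactly your two ingredients: computing the infimum via the critical-point equation $\bigl(2^{-j}(1+|k|)\bigr)^{S'(\sigma_{j,k})}=1+|k|$ (your $S'(\sigma_*)=-t/(1-t)$, rewritten), and producing sequences $(j_n,k_n)$ with $|k_n|\approx 2^{j_n t_0}$, $t_0=S'(\sigma_0)/(S'(\sigma_0)-1)$, so that $\sigma_{j_n,k_n}\to\sigma_0$.

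Two minor differences worth noting. First, for the inclusion direction your Step~(ii) is cleaner than the paper's Part~B: the trivial bound $|c_{j,k}|\leq u_{j,k}(\sigma_0)$ already \emph{is} the $C_{x_0}^{S(\sigma_0),\sigma_0-S(\sigma_0)}$ condition with constant $C=1$, whereas the paper's case analysis and Lemma~\ref{lema-tecnico} are there only to absorb the extra $\mathcal{C}_{j,k},\lambda_{j,k}$ factors of the general theorem. Second, for the sharpness direction the paper routes the sequence construction through the partition $\mathbb{N}=\bigcup_m\Lambda_m$ and a dense set $\{r_m\}$ in $\im\bigl(S'/(S'-1)\bigr)$; this machinery is again needed only for the generalization (condition~(ii) of Theorem~\ref{teo_gral_2ml_version4} is stated on the index set $I$ built from the $\Lambda_m$). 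Your direct choice of $k_j$ as the nearest integer to $2^{jt_0}$ is entirely adequate for the proposition itself. Your caveat that the implicit-function argument requires $S''(\sigma_0)<0$ matches the paper's standing hypothesis in Theorem~\ref{teo_gral_2ml_version4}; neither proof covers a merely concave (e.g.\ piecewise-linear) $S$ that is not a line.
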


In this manuscript we  present a general formula which is a variation of  formula (\ref{formulaGJL}). The wavelet coefficients $c_{j,k}$ of the prototype family of distributions, with prescribed 2-microlocal frontier at $ x_0 $, will satisfy:

\begin{eqnarray}
\label{coef_resultado_ppal}
c_{j,k}\leq \mathcal{C}_{j,k}\;. \inf_{\sigma\in \mathbb{R}}  \left\{~2^{-jS(\sigma)}\left(\frac{1+\left|k-2^j x_0\right|}{\lambda_{j,k}}\right)^{S (\sigma)-\sigma}\right\},
\end{eqnarray}

with  $\mathcal{C}_{j,k} $ and   $\lambda_{j,k} $  positive sequences that satisfy some specific conditions (see Theorem \ref{teo_gral_2ml_version4}). 

Selecting $ \mathcal {C}_{j, k} $ and $\lambda_{j, k}$ appropriately, the formulas explicitly  built in \cite {GuiJaffardLevy1998}, \cite {Meyer1998} and \cite {LevySeuret2004}, can be adapted to the general formula  (\ref {coef_resultado_ppal}). Therefore the functions proposed in the three articles are members of this prototype family.

\section{A generalization of the 2-microlocal frontier prescription}

In this section we state and prove the two main results of this manuscript. Theorem \ref{teo_gral_2ml_version4} yields a wide class of functions (or distributions) whose 2-microlocal frontier is a given concave downward function $S(\sigma)$ with $S''(\sigma)<0$. This theorem contains the examples constructed by \cite{Meyer1998,GuiJaffardLevy1998,LevySeuret2004}. Theorem \ref{teo_lineal_caracterizacion} is in fact more satisfactory, since for the case that the prescribed 2-microlocal frontier is a line we characterize all functions (or distributions) whose 2-microlocal frontier is the given line.

We start with some needed preliminary results.
\subsection{Preliminary results}

We state two lemmas, without proofs, that will be useful to prove the main theorems.
Formulas (\ref{formulaGJL}) and (\ref{coef_resultado_ppal}) are based on the calculation of the infimum of the functions
$a^{S(\sigma)} b^{ S(\sigma)-\sigma},$
defined on $\mathbb{R}$, with fixed $a,b>0$.

If $S$ is the line $S(\sigma)= M\sigma+d$ with $M\leq 0$, it is easy to prove that
\begin{equation}
\label{formula_inf_lineal}
\inf_{\sigma\in \mathbb{R}}\{ a^{S(\sigma)} b^{S(\sigma)-\sigma} \}=
\left\{ 
\begin{array}{ccc}
0& \textrm{if}  &  {(ab)}^{M} \neq b\\
\\
(ab)^d & \textrm{if}  & {(ab)}^{M} = b, \\
\end{array}
\right. 
\end{equation} 

We will focus on the case
$0<a\leq 1$, since we are interested in the case $a=2^{-j}$ with $j\geq 0$.  We then have the following lemma.
\begin{lemma}	
	\label{lema_calculo_inf}
	Let $ S (\sigma) $ be a decreasing function defined on $\mathbb{R}$, such that  either $ S (\sigma) $ is concave downwards with $ S'' (\sigma) <0 $ or $ S (\sigma) $ is a line. Let $0<a\leq 1$ and $b>0$. Then  
	
	\begin{itemize}
		\item For  $a=b=1$:
		$$\inf_{\sigma\in \mathbb{R}}\{ a^{S(\sigma)} b^{S(\sigma)-\sigma} \}=1.$$
		
		\item For $1\leq b<\frac{1}{a}$:\\
		$\inf_{\sigma\in \mathbb{R}}\{ a^{S(\sigma)} b^{S(\sigma)-\sigma} \}=\left\{ 
		\begin{array}{ccc}
		a^{S(\sigma_1)} b^{S(\sigma_1)-\sigma_1}
		& \textrm{if}& \exists\; \sigma_1 :(ab)^{ S'(\sigma_1)}=b  \\
		\\

		\displaystyle{\lim_{\sigma\rightarrow +\infty} a^{S(\sigma)} b^{S(\sigma)-\sigma}}& \textrm{if}&    (ab)^{ S'(\sigma)}<b \;\forall \sigma\\
		\\
		\displaystyle{\lim_{\sigma\rightarrow -\infty} a^{S(\sigma)} b^{S(\sigma)-\sigma}}& \textrm{if}&   (ab)^{ S'(\sigma)}>b \;\forall \sigma\\
		
		\end{array}
		\right.$
		
		\item For any other $a$ and $b$    $\inf_{\sigma\in \mathbb{R}}\{ a^{S(\sigma)} b^{S(\sigma)-\sigma} \}=0.$
		
	\end{itemize}

\end{lemma}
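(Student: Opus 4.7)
The plan is to take logarithms and reduce the problem to a one-dimensional convexity analysis. Writing $F(\sigma):=a^{S(\sigma)}b^{S(\sigma)-\sigma}$ and
\[
g(\sigma):=\log F(\sigma)=S(\sigma)\log(ab)-\sigma\log b,
\]
I have $\inf_{\sigma}F=\exp(\inf_{\sigma}g)$. Differentiating formally (legitimate under either hypothesis on $S$, interpreting $S''\equiv 0$ in the linear case) gives
\[
g'(\sigma)=S'(\sigma)\log(ab)-\log b,\qquad g''(\sigma)=S''(\sigma)\log(ab),
\]
and the crucial observation is that $g'(\sigma_1)=0$ is exactly the equation $(ab)^{S'(\sigma_1)}=b$ appearing in the statement. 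The sign of $g''$ is controlled by that of $\log(ab)$, because $S''\le 0$.

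Case~1 is immediate: $a=b=1$ gives $F\equiv 1$. For Case~2 the hypothesis $1\le b<1/a$ forces $a<1$ and $ab<1$, so $\log(ab)<0$ and hence $g''\ge 0$; thus $g$ is convex (strictly so when $S''<0$). Convexity localises the infimum: if some $\sigma_1$ solves $(ab)^{S'(\sigma_1)}=b$, then $g$ attains its global minimum at $\sigma_1$, yielding $\inf F=F(\sigma_1)$; if $(ab)^{S'(\sigma)}<b$ uniformly, then $g'<0$ everywhere, $g$ is strictly decreasing and $\inf g=\lim_{\sigma\to+\infty}g(\sigma)$; symmetrically, if $(ab)^{S'(\sigma)}>b$ uniformly, $g$ is strictly increasing and the infimum is reached in the limit $\sigma\to-\infty$. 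The linear case $S(\sigma)=M\sigma+d$ fits seamlessly: since $S'\equiv M$ is constant, exactly one of the three alternatives occurs, and when $(ab)^M=b$ the function $g$ collapses to the constant $d\log(ab)$, producing $F\equiv(ab)^d$, in agreement with formula~(\ref{formula_inf_lineal}).

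For Case~3 I would show that $g$ is unbounded below by splitting according to where $(a,b)$ sits: (i) if $b<1$, then $\log b<0$ and $\log(ab)<0$, so $g'(\sigma)=S'(\sigma)\log(ab)-\log b\ge-\log b>0$ and $g$ is strictly increasing, with $g(\sigma)\to-\infty$ as $\sigma\to-\infty$, driven by the linear term $-\sigma\log b$ (using monotonicity of $S$ to control $S(\sigma)\log(ab)$); (ii) if $ab>1$, then necessarily $b>1$ (because $a\le 1$), $g''\le 0$ makes $g$ concave, and the linear term $-\sigma\log b$ forces $g\to-\infty$ as $\sigma\to+\infty$; (iii) if $ab=1$ with $b>1$, then $\log(ab)=0$ reduces $g$ to the linear map $\sigma\mapsto-\sigma\log b$, which again diverges to $-\infty$ as $\sigma\to+\infty$. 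These three sub-cases exhaust the complement of Cases~1 and~2 inside $\{0<a\le 1,\,b>0\}$, so $\inf F=0$ throughout Case~3.

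The main obstacle is not the calculus but the bookkeeping: one must verify that the three sub-cases of Case~2 are mutually exclusive and exhaustive, that Case~3 genuinely covers every remaining pair $(a,b)$, and, most delicately, that the asymptotic analysis in Case~3 is robust against the edge possibilities $\lim_{\sigma\to\pm\infty}S(\sigma)=\pm\infty$, which the hypotheses permit. Once those limit behaviours of $S$ are isolated and shown to be dominated by the linear term $-\sigma\log b$ (or, in Case~2, absorbed into the convexity of $g$), the lemma follows directly from the second-derivative analysis of $g$.
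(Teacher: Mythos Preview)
The paper states this lemma explicitly \emph{without proof} (``We state two lemmas, without proofs, that will be useful to prove the main theorems''), so there is no argument in the paper to compare against. Your approach---passing to $g(\sigma)=S(\sigma)\log(ab)-\sigma\log b$ and analysing the signs of $g'(\sigma)=S'(\sigma)\log(ab)-\log b$ and $g''(\sigma)=S''(\sigma)\log(ab)$---is correct and is the natural route.

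Two small simplifications. In Case~3(ii) you invoke concavity of $g$, but since $S'\le 0$, $\log(ab)>0$, and $\log b>0$, you already have $g'(\sigma)\le -\log b<0$, so $g$ decreases at a uniform linear rate and $g(\sigma)\to-\infty$ as $\sigma\to+\infty$ with no appeal to concavity or to the asymptotics of $S$. Likewise in Case~3(i) the bound $g'(\sigma)\ge -\log b>0$ gives $g(\sigma)\le g(0)+\sigma(-\log b)\to-\infty$ as $\sigma\to-\infty$ directly; the growth of $S$ at $-\infty$ is irrelevant. Finally, the exhaustiveness of the three alternatives in Case~2 follows from the intermediate value theorem applied to the continuous function $g'$ (continuity of $S'$ being guaranteed by either hypothesis on $S$), and your case split (i)--(iii) in Case~3 does cover the full complement of Cases~1 and~2 inside $\{0<a\le 1,\,b>0\}$.
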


\begin{remark}
\label{Obs_j_negativo}
 Lemma \ref {lema_calculo_inf} can be extended to the case $a>1$ if $ S (\sigma) $ is a decreasing and  concave downwards function,  with $ S'' (\sigma) <0 $. 

 The value $\sigma_1$, given in Lema  \ref{lema_calculo_inf}, is unique if $ S (\sigma) $ is a decreasing function that is concave downwards with $ S'' (\sigma) <0 $. 
\end{remark}

\begin{lemma}
	\label{lema-tecnico}
	Let $ S (\sigma) $ be a decreasing function defined on $\mathbb{R}$, such that  either $ S (\sigma) $ is concave downwards with $ S'' (\sigma) <0 $ or $ S (\sigma) $ is a line. Let $\varepsilon>0$ and  $s_0=S(\sigma_0)$. Then, there exists $B<0$ such that
	\begin{equation}\label{acotacion_cociente}
	\frac{S'(\sigma)(\sigma_0-\sigma)+S(\sigma)-s_0+\varepsilon}{S'(\sigma)-1}<B \quad \quad	\forall \sigma \in\mathbb{R}.
	\end{equation}
\end{lemma}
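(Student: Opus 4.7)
The plan is to study the ratio $\Phi(\sigma):=N(\sigma)/(S'(\sigma)-1)$, where $N(\sigma):=S'(\sigma)(\sigma_0-\sigma)+S(\sigma)-s_0+\varepsilon$, and to show it is continuous, strictly negative on $\mathbb{R}$, and has strictly negative $\limsup$ at $\pm\infty$; any $B\in(\sup_\sigma\Phi,\,0)$ then satisfies (\ref{acotacion_cociente}). First I would pin down the signs: since $S$ is decreasing, $S'(\sigma)\le 0$, so the denominator satisfies $S'(\sigma)-1\le -1<0$. Concavity of $S$ (in either admissible sub-case) gives the tangent-line inequality $S(\sigma_0)\le S(\sigma)+S'(\sigma)(\sigma_0-\sigma)$, so $N(\sigma)\ge \varepsilon>0$. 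Hence $\Phi$ is continuous and strictly negative on $\mathbb{R}$, and on every compact subset its supremum is attained and strictly less than $0$. The task therefore reduces to bounding $\limsup_{\sigma\to\pm\infty}\Phi(\sigma)$ strictly below $0$.

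Next I would dispose of the easy cases, which cover everything except $\sigma\to+\infty$ with $S'(\sigma)$ unbounded. In the linear case $S(\sigma)=M\sigma+d$ with $M\le 0$, the tangent-line inequality is an equality, so $N\equiv\varepsilon$ and $\Phi\equiv\varepsilon/(M-1)$ is a negative constant. For $S$ strictly concave and $\sigma\le\sigma_0$, monotonicity of $S'$ confines $S'(\sigma)\in[S'(\sigma_0),0]$, so $|S'(\sigma)-1|\le 1-S'(\sigma_0)$ and the estimate $\Phi(\sigma)\le \varepsilon/(S'(\sigma)-1)\le -\varepsilon/(1-S'(\sigma_0))$ follows. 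The same argument covers $\sigma\to+\infty$ in the sub-case $L_+:=\lim_{\sigma\to+\infty}S'(\sigma)>-\infty$, since $S'$ then stays bounded on $[\sigma_0,\infty)$.

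The remaining sub-case, namely $S$ strictly concave with $S'(\sigma)\to -\infty$ as $\sigma\to+\infty$, is the principal obstacle, because both numerator and denominator blow up and concavity has to be exploited quantitatively. I would invoke the integral identity
$$S'(\sigma)(\sigma_0-\sigma)+S(\sigma)-s_0=\int_{\sigma_0}^{\sigma}\bigl(S'(u)-S'(\sigma)\bigr)\,du,$$
whose integrand is non-negative by the monotonicity of $S'$. For $\sigma\ge\sigma_0+1$, restricting the range of integration to $[\sigma_0,\sigma_0+1]$ and using $S'(u)\ge S'(\sigma_0+1)$ on that interval yields $N(\sigma)\ge -S'(\sigma)+c$ with $c:=S'(\sigma_0+1)+\varepsilon$. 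Dividing by the negative denominator flips the inequality to
$$\Phi(\sigma)\le \frac{-S'(\sigma)+c}{S'(\sigma)-1},$$
and this right-hand side tends to $-1$ as $S'(\sigma)\to -\infty$; consequently $\limsup_{\sigma\to+\infty}\Phi(\sigma)\le -1<0$. Combining the three regimes yields $\sup_\sigma\Phi<0$, and taking $B:=\tfrac{1}{2}\sup_\sigma\Phi$ concludes the proof.
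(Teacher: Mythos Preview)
Your argument is correct. The sign analysis (denominator $\le -1$, numerator $\ge\varepsilon$ by the tangent-line inequality) is sound, the treatment of the linear case and of the region $\sigma\le\sigma_0$ (where $S'$ stays in $[S'(\sigma_0),0]$) is clean, and the integral identity
\[
S'(\sigma)(\sigma_0-\sigma)+S(\sigma)-s_0=\int_{\sigma_0}^{\sigma}\bigl(S'(u)-S'(\sigma)\bigr)\,du
\]
is exactly the right tool for the remaining case $S'(\sigma)\to-\infty$: truncating the range to $[\sigma_0,\sigma_0+1]$ produces the linear-in-$S'(\sigma)$ lower bound $N(\sigma)\ge -S'(\sigma)+c$, and the resulting upper bound $\Phi(\sigma)\le(-S'(\sigma)+c)/(S'(\sigma)-1)\to -1$ does the job. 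The glue step (continuous, strictly negative, $\limsup<0$ at both ends $\Rightarrow$ $\sup_\sigma\Phi<0$) is standard. One very minor cosmetic point: when you write ``$S'(\sigma)\in[S'(\sigma_0),0]$'' you are implicitly using $S'\le 0$, which you already recorded; it might be worth a word to flag this.

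As for comparison with the paper: there is nothing to compare, since the paper explicitly states this lemma \emph{without proof}. Your write-up therefore fills a genuine gap in the exposition.
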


Inspired by the construction in \cite{Meyer1998} we write  $\mathbb{N}$ as an infinite union of disjoint infinite sets of integers:
\begin{equation}\label{particion_N}
\mathbb{N}=\;  \text{\textit{d}} \hspace{-14.5pt}\bigcup
_{m\in\mathbb{N}}{\Lambda}_m,
\end{equation}

with ${\Lambda}_m$ an infinite set of natural numbers, e.g. ${\Lambda}_m$ could be the set of natural numbers such that their binary decomposition has exactly  $m$ ones.

Let $ S (\sigma) $ be a decreasing function defined on $\mathbb{R}$, such that  either $ S (\sigma) $ is concave downwards with $ S'' (\sigma) <0 $ or $ S (\sigma) $ is a line. 
We define $\left\{r_m\right\}_{m\in\mathbb{N}}$ to be a sequence such that:
\begin{equation}
\label{rm}
\centering{
	\left\{r_m\right\}_{m\in\mathbb{N}}}
\text{ is a dense subset of } \im\left(\frac{S'(\sigma)}{S'(\sigma)-1}\right)\subseteq[0,1],
\end{equation}

or, if $S(\sigma)$ is linear,   $r_m$  is  the constant $\frac{S'(\sigma)}{S'(\sigma)-1}$ for all $m$.

The following set of subscripts will play an important role in the main theorem.
\begin{equation}\label{Ijk}
\emph{I}= \{(j,k_j): j\in \Lambda_m \text{ and } \left[ \left|k_j-2^jx_0\right| \right]=[2^{jr_m}]\},
\end{equation}

where $\left[x\right]$ denotes, as usual, the integer part of $x$.

\subsection{Main results}

\begin{theorem}
	\label{teo_gral_2ml_version4}
Let $ S (\sigma) $ be a decreasing concave downwards function defined on $\mathbb{R}$  with $S'' (\sigma) <0$.
Let $\mathcal{C}_{j,k} $ and  $\lambda_{j,k}$ be positive sequences such that for $k_j$ such that $ |k_j-2^j\;x_0|<2^j$ they verify

	\begin{enumerate}[(i)]
		
		\item For any  $C\in \mathbb{R}$, 
		$$\displaystyle {\varlimsup_{\substack{j\rightarrow +\infty}}{\left( \frac{\log_2\left(\mathcal{C}_{j,k_j}\right) }{j} +C \; \frac{\log_2\left(\lambda_{j,k_j}\right) }{j}\right) }\;\leq 0}.$$
	\end{enumerate}
	
\begin{enumerate}[(i)]
\setcounter{enumi}{1}

		\item For $(j,k_j)\in I$ given by (\ref{Ijk}),
		 \begin{center}
		 	$\displaystyle {\lim_{\substack{j\rightarrow +\infty}}\frac{\log_2\left(\mathcal{C}_{j,k_j}\right) }{j}= 0} \;\;$ \;\text{ and } \;$\displaystyle {\lim_{\substack{j\rightarrow +\infty}}\frac{\log_2\left(\lambda_{j,k_j}\right) }{j}= 0}. \;\;$
		 \end{center}

	\end{enumerate}
Let the coefficients $c_{j,k}$ be such that
	
	\begin{equation}
	\label{coef_resultado_ppal_version5}
	\left|c_{j,k}\right| \leq \mathcal{C}_{j,k}\;. \displaystyle{\inf_{\sigma\in \mathbb{R}}  \left\{~2^{-jS(\sigma)}\left(\frac{1+\left|k-2^jx_0\right|}{\lambda_{j,k}}\right)^{S (\sigma)-\sigma}\right\} },
		\end{equation}	
and if $(j,k)\in I$ we require \textbf{equality}. 

If $\psi$ is any wavelet in the Schwartz class with infinitely vanishing moments then the function (or the distribution) $f$ defined by its wavelet expansion as
	
\begin{equation}
\nonumber
f(x)=\sum_{j\geq 0}\;\sum_{\substack{k\in \mathbb{Z},\; |k-2^jx_0|<2^j}}~~c_{j,k} \; \psi (2^{j}x-k) 
\end{equation}
has  $S(\sigma)$ as its 2-microlocal frontier at $x_0$.

\end{theorem}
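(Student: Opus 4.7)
The proof establishes the two opposing inclusions that characterize the 2-microlocal frontier: for every $\sigma_0\in\mathbb{R}$ and every $\varepsilon>0$, writing $s_0=S(\sigma_0)$ and $s_0'=\sigma_0-s_0$, I shall prove
\begin{itemize}
\item[(A)] $f\in C_{x_0}^{\,s_0-\varepsilon,\,s_0'+\varepsilon}$, and
\item[(B)] $f\notin C_{x_0}^{\,s_0+\varepsilon,\,s_0'-\varepsilon}$.
\end{itemize}
By Definition~\ref{pertenencia_2ml}, only the coefficients $c_{j,k}$ with $j\geq 0$ and $|k-2^j x_0|<2^j$ are relevant, which is precisely the range the hypotheses control; convergence of the wavelet series as a distribution on a neighborhood of $x_0$ will be automatic from the polynomial decay established in~(A).

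For (A), the plan is to apply~(\ref{coef_resultado_ppal_version5}) at a \emph{single} $\sigma^{*}=\sigma_0+\eta$, independent of $(j,k)$, with $\eta>0$ chosen small enough that $A:=S(\sigma_0+\eta)-(s_0-\varepsilon)>0$ (possible by continuity of $S$). A direct algebraic rearrangement shows that the target inequality $|c_{j,k}|\leq C\,2^{-j(s_0-\varepsilon)}(1+|k-2^j x_0|)^{-(s_0'+\varepsilon)}$ is equivalent to
\[
\mathcal{C}_{j,k}\,\lambda_{j,k}^{\,\sigma^{*}-S(\sigma^{*})}\,2^{-jA}\,(1+|k-2^j x_0|)^{A-\eta}\;\leq\;C
\]
uniformly for $|k-2^j x_0|<2^j$. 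Setting $\rho_{j,k}=j^{-1}\log_2(1+|k-2^j x_0|)\in[0,1+o(1)]$, taking $\log_2$ and dividing by $j$, hypothesis~(i) with the fixed constant $\sigma^{*}-S(\sigma^{*})$ forces the $\mathcal{C}_{j,k}\,\lambda_{j,k}^{\,\sigma^{*}-S(\sigma^{*})}$ contribution to be $o(1)$ uniformly in $k$, whereas the remaining main term $-A+(A-\eta)\rho$ is an affine function of $\rho$ whose values at the endpoints are $-A<0$ and $-\eta<0$, hence bounded above on $[0,1]$ by $-\min(A,\eta)<0$. Combining these gives a strictly negative leading exponent, so the inequality holds for all large $j$; the finitely many small-$j$ exceptions are absorbed into $C$. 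Lemma~\ref{lema-tecnico} is precisely the tangent-line packaging of this uniform estimate.

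For (B) I argue by contradiction: assume $|c_{j,k}|\leq C\,2^{-j(s_0+\varepsilon)}(1+|k-2^j x_0|)^{-(s_0'-\varepsilon)}$. Set $r_0=S'(\sigma_0)/(S'(\sigma_0)-1)\in(0,1)$ (since $S'(\sigma_0)<0$), and note $r_0\in\im(S'/(S'-1))$ by definition, so by density of $\{r_m\}$ I can select $r_m$ arbitrarily close to $r_0$. For each $(j,k_j)\in I$ with $j\in\Lambda_m$, equality holds in~(\ref{coef_resultado_ppal_version5}), and Lemma~\ref{lema_calculo_inf} locates the minimizer $\sigma_1$ by $S'(\sigma_1)=L_{j,k_j}/(L_{j,k_j}-j)$, with $L_{j,k_j}=\log_2\!\big((1+|k_j-2^j x_0|)/\lambda_{j,k_j}\big)$. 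Since $[\,|k_j-2^j x_0|\,]=[2^{jr_m}]$ and hypothesis~(ii) yields $j^{-1}\log_2\mathcal{C}_{j,k_j}\to 0$, $j^{-1}\log_2\lambda_{j,k_j}\to 0$, one obtains $L_{j,k_j}/j\to r_m$, $\sigma_1\to\sigma_1^{*}:=(S')^{-1}(r_m/(r_m-1))$ (well-defined by $S''<0$), and
\[
\frac{\log_2|c_{j,k_j}|}{j}\;\longrightarrow\;-\phi(r_m),\qquad \phi(r):=S(\sigma_1^{*}(r))(1-r)+\sigma_1^{*}(r)\,r.
\]
The hypothetical upper bound then forces $\phi(r_m)\geq s_0+s_0' r_m+\varepsilon(1-r_m)$ for every such $r_m$. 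But a direct computation (using $\sigma_1^{*}(r_0)=\sigma_0$) gives $\phi(r_0)=s_0+s_0' r_0$, so by continuity of $\phi$ and the fact that $1-r_0>0$, the strict reverse inequality $\phi(r_m)<s_0+s_0' r_m+\varepsilon(1-r_m)$ holds for $r_m$ sufficiently close to $r_0$, a contradiction.

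The chief obstacle is the uniformity in~(A): the choice $\sigma^{*}\neq\sigma_0$ is forced, because at $\sigma^{*}=\sigma_0$ the affine function in $\rho$ would vanish at $\rho=1$, while hypothesis~(i) only gives subexponential, not bounded, growth of $\mathcal{C}_{j,k}\,\lambda_{j,k}^{\,s_0'}$. Part~(B) is essentially a matching lower bound on the infimum in~(\ref{coef_resultado_ppal_version5}) realized along the set $I$, which is exactly why $I$ must be constructed using a dense set of tangent slopes $\{r_m\}\subset\im(S'/(S'-1))$.
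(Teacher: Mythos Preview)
Your proof is correct and follows the same overall architecture as the paper's (prove membership in $C_{x_0}^{s_0-\varepsilon,\sigma_0-s_0+\varepsilon}$ and non-membership in $C_{x_0}^{s_0+\varepsilon,\sigma_0-s_0-\varepsilon}$), but your treatment of the membership direction~(A) is genuinely more economical than the paper's.

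For the \emph{membership} part, the paper first computes the infimum in~(\ref{coef_resultado_ppal_version5}) exactly via Lemma~\ref{lema_calculo_inf}, splitting into the three cases in~(\ref{formula_coef_dem}), and then invokes Lemma~\ref{lema-tecnico} to obtain a uniform negative bound on the resulting exponent
\[
\frac{S'(\sigma_{j,k})(\sigma_0-\sigma_{j,k})+S(\sigma_{j,k})-s_0+\varepsilon}{S'(\sigma_{j,k})-1}.
\]
You bypass both lemmas by simply evaluating the infimum at one fixed $\sigma^*=\sigma_0+\eta$ and reducing the question to the affine function $-A+(A-\eta)\rho$ on $[0,1]$, which is manifestly bounded above by $-\min(A,\eta)<0$. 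This is shorter and makes the role of hypothesis~(i) more transparent; your closing remark that $\eta>0$ is forced (because at $\eta=0$ the affine function vanishes at $\rho=1$ while~(i) only gives subexponential, not bounded, growth) is exactly the conceptual point. Your parenthetical that Lemma~\ref{lema-tecnico} is ``the tangent-line packaging of this uniform estimate'' is a bit misleading, though: the paper's route really does need that lemma, whereas yours does not.

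For the \emph{non-membership} part~(B), your argument is essentially the paper's: both exploit equality on the set $I$, Lemma~\ref{lema_calculo_inf} to locate the minimizer, hypothesis~(ii) to control $\mathcal{C}_{j,k_j},\lambda_{j,k_j}$, and the density of $\{r_m\}$ near $r_0=S'(\sigma_0)/(S'(\sigma_0)-1)$. The only organizational difference is that the paper diagonalizes (choosing $r_{m_n}\to r_0$ and $j_n\in\Lambda_{m_n}$ strictly increasing, so that $\sigma_n\to\sigma_0$ directly), while you fix $m$, pass to the limit in $j\in\Lambda_m$ to obtain the clean identity $j^{-1}\log_2|c_{j,k_j}|\to -\phi(r_m)$, and then let $r_m\to r_0$ by continuity of $\phi$. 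Both are valid; your version isolates the limiting function $\phi$ explicitly, which is pleasant.
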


\begin{proof}
	Without loss of generality we consider  $x_0=0$. The set of subscripts, in formula (\ref{Ijk}), is then $$I= \{(j,k_j): j\in \Lambda_m \text{ and } \lvert k_j\rvert=[2^{jr_m}]\}.$$
	
	Let $j\geq 0 $. For simplicity we consider the equality
	$$\left|c_{j,k}\right|= \mathcal{C}_{j,k}\;. \inf_{\sigma\in \mathbb{R}}  \left\{~2^{-jS(\sigma)}\left(\frac{1+\left|k\right|}{\lambda_{j,k}}\right)^{S (\sigma)-\sigma}\right\},  $$
	for  $j\geq 0 $ and $ |k|<2^j$, although it will be clear, in the proof, that we only need the equality for $(j,k) \in I$, and the inequality if $(j,k)\notin I$.

	By using Lemma \ref{lema_calculo_inf} for $a=2^{-j}$ and $b=\frac{1+\left|k\right|}{\lambda_{j,k}}$, we compute the wavelet coefficient as:
	\begin{itemize}
		\item For  $ 2^{-j}=1$ and $\frac{1+\left|k\right|}{\lambda_{j,k}}=1$:
		$\left|c_{j,k} \right|= \mathcal{C}_{j,k}= \mathcal{C}_{0,0}$
		\item For $1\leq \frac{1+\left|k\right|}{\lambda_{j,k}}< 2^j$:
		\begin{equation}
		\label{formula_coef_dem}
		\left|c_{j,k}\right|=\left\{ 
		\begin{array}{ccc}
		
		\mathcal{C}_{j,k} \; \left( 2^{-j}\right) ^{S(\sigma_{j,k})} \left(\frac{1+\left|k\right|}{\lambda_{j,k}}\right)^{S(\sigma_{j,k})-\sigma_{j,k}} &\text{if}&  \exists \;\sigma_{j,k}: \left(2^{-j}\frac{1+\left|k\right|}{\lambda_{j,k}}\right)^{ S'(\sigma_{j,k})}=\frac{1+\left|k\right|}{\lambda_{j,k}}
		\\
		\\
		\\
		\mathcal{C}_{j,k} \;\displaystyle{\lim_{\sigma\rightarrow +\infty} (2^{-j})^{S(\sigma)} \left(\frac{1+\left|k\right|}{\lambda_{j,k}}\right)^{S(\sigma)-\sigma}}
		&\text{ if }& \left(2^{-j}\frac{1+\left|k\right|}{\lambda_{j,k}}\right)^{ S'(\sigma)}<\frac{1+\left|k\right|}{\lambda_{j,k}} \;\;\; \forall \sigma
		\\  
		\\
		\\
		\mathcal{C}_{j,k} \;\displaystyle{\lim_{\sigma\rightarrow -\infty} (2^{-j})^{S(\sigma)} \left(\frac{1+\left|k\right|}{\lambda_{j,k}}\right)^{S(\sigma)-\sigma}}
		&\text{ if }& \left(2^{-j}\frac{1+\left|k\right|}{\lambda_{j,k}}\right)^{ S'(\sigma)}>\frac{1+\left|k\right|}{\lambda_{j,k}} \;\;\; \forall \sigma
		
		\end{array}
		\right.
		\end{equation}
		
		\item For any other $j,k$:
		$$\left|c_{j,k}\right|=0$$
	\end{itemize}

	Let  $(\sigma_0, s_0)$ be  a point on the graph of $S(\sigma)$. Our purpose is to prove $$\sup \{s : f \in { C}_{0}^{s,\sigma_0-s}\}=s_0,$$
	that is to prove that \textbf{A)} $f\notin { C}_{0}^{s_0+\varepsilon, \sigma_0-(s_0+\varepsilon)}$  and   \textbf{B)} $f\in { C}_{0}^{s_0-\varepsilon, \sigma_0-(s_0-\varepsilon)}$, for all $\varepsilon>0$.
\\	
\textbf{A)} Let us show that $f\notin { C}_{0}^{s_0+\varepsilon, \sigma_0-(s_0+\varepsilon)}$ for all $\varepsilon>0$:
		
Let us assume on the contrary that there exists
$\varepsilon>0$ such that  $f\in { C}_{0}^{s_0+\varepsilon, \sigma_0-(s_0+\varepsilon)}$ which means that there exists a constant $C > 0$ such that
		\begin{eqnarray}
		\label{formula_pert}
		|c_{j,k}| \leq C 2^{-j (s_0 +\varepsilon)}\;(1 + | k  | )^{s_0 +\varepsilon-\sigma_0}
		\end{eqnarray}
		for all $ j $ and $k \in {\mathbb Z}$: $ | k| < 2^j$, and let us show that it is a contradiction.
		
We will prove that for a given  $(\sigma_0, s_0)$  it is possible to construct a sequence $(j_n, k_n)$, with $j_n$ strictly increasing, such that $$(j_n, k_n)\in \emph{I}= \{(j,k_j): j\in \Lambda_m \text{ and } \lvert k_j\rvert=[2^{jr_m}]\},$$ 
and verifies that there exists $\sigma_n$ such that
\begin{equation}\label{formula_renglon1}
		\left(2^{-j_n}\frac{1+\left|k_n\right|}{\lambda_{j_n,k_n}}\right)^{S'(\sigma_n)}=\frac{1+\left|k_n\right|}{\lambda_{j_n,k_n}} \;\text{ and }\; \lim_{n \to +\infty} \sigma_n=\sigma_0.
\end{equation}

We construct the sequence $ (j_n, k_n) $ in the following way. Since $S'(\sigma_0)<0$,  $\frac{S'(\sigma_0)}{S'(\sigma_0)-1} \in \left[0,1\right)$. Therefore, there exists a subsequence  $(r_{m_n})_{n\in\mathbb{N}}$  of  $\{r_m\}_{m\in\mathbb{N}}$ which is dense in $\im\left(\frac{S'(\sigma)}{S'(\sigma)-1}\right)$, such that 
		$$ \lim_{n \to +\infty} {r_{m_n}}= \frac{S'(\sigma_0)}{S'(\sigma_0)-1}.$$
Since the sets ${\Lambda}_{m_n}$ are infinite we can choose $j_n\in{\Lambda}_{m_n}$ a strictly increasing sequence and  $k_n=k_{j_n}=[2^{j_n r_{m_{n}}}]$. Hence $(j_n, k_n)\in I$ and
$$  2^{j_n r_{m_n}}\leq 1+ |k_n| \leq 1+2^{j_n r_{m_n}}\leq 2 \;2^{j_n r_{m_n}}.$$
Therefore, taking into account the hypothesis  $\displaystyle {\lim_{n\rightarrow +\infty}\frac{\log_2\left(\lambda_{j_n,k_n}\right) }{j_n}= 0},$ we obtain
		\begin{eqnarray}
		\nonumber
		\lim_{n \to +\infty} {\frac{\ln \left(\frac{1+\left |k_n\right|}{\lambda_{j_n,k_n}} \right) }{\ln \left(2^{-j_n}\;\frac{1+\left|k_n\right|}{\lambda_{j_n,k_n}}\right)}}  
		&=&  \lim_{n \to +\infty} {\frac{\ln \left(1+\left|k_n\right|\right)-\ln(\lambda_{j_n,k_n})  }{-j_n\ln (2)+\ln \left(1+\left|k_n\right|\right)-\ln(\lambda_{j_n,k_n}) }}
		\\ \nonumber &=& \lim_{n \to +\infty} {\frac{j_n\left( \frac{K}{j_n}+\ln (2) r_{m_n}-\frac{\ln\left(\lambda_{j_n,k_n}\right)}{j_n}\right)  }{j_n \left(\frac{K}{j_n}+\ln (2) (r_{m_n}-1)-\frac{\ln\left(\lambda_{j_n,k_n}\right)}{j_n}\right) }} \\ \nonumber &=& S'(\sigma_0).
		\end{eqnarray}
Since $S$ is strictly concave downwards, the function $S':\mathbb{R}\longrightarrow \im(S')$ is strictly decreasing and thus  bijective. Then there exists $\sigma_n $ such that 
		$$ \lim_{n \to +\infty} \sigma_n=\sigma_0   \;\; and\;\;S'(\sigma_n)=\frac{\ln \left(\frac{1+\left |k_n \right|}{\lambda_{j_n,k_n}} \right) }{\ln \left(2^{-j_n}\;\frac{1+\left|k_n\right|}{\lambda_{j_n,k_n}}\right)}$$
for $n\geq n_0$, $n_0\in \mathbb{N}$, which is  equivalent to (\ref{formula_renglon1}).
		
Furthermore, we can take $n_0$ such that, for all $n\geq n_0$,    $$1\leq \frac{1+\left|k_n\right|}{\lambda_{j_n,k_n}}\leq 2^{j_n},$$
and thus  taking $\log_2(\cdot)$ and  dividing by $j_n$, we have
		$$ 0 \leq \lim_{n \to +\infty} {\frac{\log_2 \left(\frac{1+\left |k_n \right|}{\lambda_{j_n,k_n}} \right) }{j_n}}= \lim_{n \to +\infty}{\left( \frac{K}{j_n}+ r_{m_n}-\frac{\ln\left(\lambda_{j_n,k_n}\right)}{j_n}\right)}=\frac{S'(\sigma_0)}{S'(\sigma_0)-1}<1. $$
In short, the first equality in the formula (\ref {formula_coef_dem}) holds with $\sigma_{j_n,k_n}=\sigma_n$ and therefore 
		$$|c_{j_n,k_n}|= \mathcal{C}_{j_n,k_n} \; \left( 2^{-j_n}\right) ^{S(\sigma_n)} \left(\frac{1+\left|k_n\right|}{\lambda_{j_n,k_n}}\right)^{S(\sigma_n)-\sigma_n}.$$
		
In consequence, the inequality in formula (\ref{formula_pert}) can be reformulated, for $n\geq n_0$,  $j=j_n$ and  $k=k_n$, as		
		$$ \mathcal{C}_{j_n,k_n} \; \left( 2^{-j_n}\right) ^{S(\sigma_n)} \left(\frac{1+\left|k_n\right|}{\lambda_{j_n,k_n}}\right)^{S(\sigma_n)-\sigma_n}  \leq C 2^{-j_n (s_0 +\varepsilon)}\;(1 + | k_n| )^{s_0 +\varepsilon-\sigma_0}.$$
Or equivalently,		
		$$  \mathcal{C}_{j_n,k_n} \leq C  2^{-j_n (s_0-S(\sigma_n))} 2^{-j_n\varepsilon}  \; (1 + | k_n| )^{s_0 -\sigma_0-S(\sigma_n)+\sigma_n} \; (1 + | k_n| )^\varepsilon \; {\left(\lambda_{j_n,k_n}\right)}^{S(\sigma_n)-\sigma_n}. $$		
Applying $\log_2(\cdot)$ and dividing by $j_n$  we obtain	
		\begin{eqnarray}	
		\frac{\log_2(\mathcal{C}_{j_n,k_n})}{j_n} &\leq& \frac{\log_2(C)}{j_n}  - \left(s_0-S(\sigma_n)\right) -\;\varepsilon  \;+ \nonumber
		\\
		&+& \;\frac{\log_2(1 + | k_n| )}{j_n} \left(s_0 -\sigma_0-S(\sigma_n)+\sigma_n\right) + 
		\nonumber\\ 
		&+&\; \frac{\log_2(1 + | k_n| )}{j_n}\varepsilon +\frac{ \log_2(\lambda_{j_n,k_n})}{j_n}(S(\sigma_n)-\sigma_n). \nonumber
		\end{eqnarray}
Recalling that $j_n$ and $k_n$ were selected such that $(j_n,k_n)\in I$,  that is $1+|k_n|=K 2^{j_n r_{m_n}}$ for $1\leq K\leq 2$, we obtain
\begin{eqnarray}
		\label{desig_parteA}
		\nonumber
		\frac{\log_2(C_{j_n,k_n})}{j_n} &\leq& \frac{\log_2(C)}{j_n}-\left(s_0-S(\sigma_n)\right) -\;\varepsilon+ \hspace{5cm}    \\
		 &+ &\left(\frac{\log_2(K)}{j_n} +r_{m_n}\right) \left(s_0 -\sigma_0-S(\sigma_n)  +\sigma_n\right)+
		 \nonumber
		\\
		&+&\left(\frac{\log_2(K)}{j_n} +r_{m_n} \right)\; \varepsilon +\frac{ \log_2(\lambda_{j_n,k_n})}{j_n}(S(\sigma_n)-\sigma_n).\;\;\end{eqnarray}
Therefore, by hypothesis, $\mathcal{C}_{j,k_j}$ and $\lambda_{j,k_j}$ satisfy for $(j_n,k_n)\in I$,
		\begin{center}
			$\displaystyle {\lim_{\substack{j\rightarrow +\infty\\(j,k_j)\in I}}\frac{\log_2\left(\mathcal{C}_{j,k_j}\right) }{j}= 0}$ \;  and
			$\displaystyle {\lim_{\substack{j\rightarrow +\infty\\(j,k_j)\in I}}\frac{\log_2\left(\lambda_{j,k_j}\right) }{j}= 0}.$
		\end{center}  
Taking limit for $n\rightarrow+\infty$ in (\ref{desig_parteA}), we obtain
		$$0\leq \varepsilon \left(\frac{S'(\sigma_0)}{S'(\sigma_0)-1}-1\right)=\varepsilon \; \frac{1}{S'(\sigma_0)-1}<0,$$
which is a contradiction.
\\		
\textbf{B)} Let us show that $f\in { C}_{0}^{(s_0-\varepsilon, \sigma_0-(s_0-\varepsilon))}$ for all $\varepsilon>0$:

We need to prove that there exists $C>0$ such that
		\begin{equation}\label{pertenecia-eps}
		|c_{j,k}| \leq C 2^{-j (s_0-\varepsilon)}\;(1 + | k  | )^{s_0-\varepsilon-\sigma_0} \quad \forall\;  j ,\; k \in {\mathbb Z}:\; j\geq0,\; | k| < 2^j.
		\end{equation}
In fact, it is enough to prove 	(\ref{pertenecia-eps}) for $j\geq n_0$, $n_0\in\mathbb{N}$, since $C$ can be adjusted for a finite set of $j$. In other words, it will be enough to find $n_0$ and $C$ such that
		\begin{equation}\label{pertenecia-eps1}
		\dfrac{|c_{j,k}|}{ 2^{-j (s_0-\varepsilon)}\;(1 + | k  | )^{s_0-\varepsilon-\sigma_0}}\leq C \quad \text{ for all }  j\geq n_0,\;   | k| < 2^j.
		\end{equation}
		
We use formula (\ref{formula_coef_dem}) to compute the  wavelet coefficients $c_{j,k}$ for different cases and  will show that the boundedness of  $\dfrac{|c_{j,k}|}{ 2^{-j (s_0-\varepsilon)}\;(1 + | k  | )^{s_0-\varepsilon-\sigma_0}}$ can be obtained in analogous ways for all cases.
		\begin{itemize}
			\item  For 
			\begin{eqnarray}
			\nonumber
			|c_{j,k}|=\mathcal{C}_{j,k} \; \left( 2^{-j}\right) ^{S(\sigma_{j,k})} \left(\frac{1+\left|k\right|}{\lambda_{j,k}}\right)^{S(\sigma_{j,k})-\sigma_{j,k}}, \text{ where } \sigma_{j,k} \text{ is such that } 
			\end{eqnarray}
			\begin{equation}
			\label{formula_1+k}
			\left(2^{-j}\frac{1+\left|k\right|}{\lambda_{j,k}}\right)^{ S'(\sigma_{j,k})}=\frac{1+\left|k\right|}{\lambda_{j,k}}, \text{ that is }\;\;
			1+\left|k\right|=\lambda_{j,k}\;2^{j\;\frac{S'(\sigma_{j,k})}{S'(\sigma_{j,k})-1}},
			\end{equation}
			it is enough to show that, for $j\geq n_0$ and $ | k|<2^j$,
			$$\frac{\mathcal{C}_{j,k} \; \left( 2^{-j}\right) ^{S(\sigma_{j,k})} \left(\frac{1+\left|k\right|}{\lambda_{j,k}}\right)^{S(\sigma_{j,k})-\sigma_{j,k}}}{2^{-j (s_0-\varepsilon)}\;(1 + | k| )^{s_0-\varepsilon-\sigma_0}} \text{ is bounded.}$$
		
			If we replace $1 + | k|$ by formula (\ref{formula_1+k}) and reformulate the last quotient,  we have to prove that
			\begin{equation}
			\label{formula_acotada}
			\mathcal{C}_{j,k} \; 2^{-j(S(\sigma_{j,k})-(s_0-\varepsilon))} \left(2^{j \frac{S'(\sigma_{j,k})}{S'(\sigma_{j,k})-1}}\right)^{S(\sigma_{j,k})-\sigma_{j,k}+\sigma_0-s_0+\varepsilon}\;(\lambda_{j,k})^{\sigma_0-s_0+\varepsilon}
			\end{equation}
			
			is bounded.

			\item The case
			$$|c_{j,k}|=\mathcal{C}_{j,k} \;\displaystyle{\lim_{\sigma\rightarrow +\infty} (2^{-j})^{S(\sigma)} \left(\frac{1+\left|k\right|}{\lambda_{j,k}}\right)^{S(\sigma)-\sigma}}$$
			is valid if
			$$\left(2^{-j}\frac{1+\left|k\right|}{\lambda_{j,k}}\right)^{S'(\sigma)}<\frac{1+\left|k\right|}{\lambda_{j,k}}\;\; \forall\sigma, 
			\;\;\text{ i.e.  }\;\;
			\left(\frac{1+\left|k\right|}{\lambda_{j,k}}\right)^{S'(\sigma)-1}<2^{jS'(\sigma)}\;\; \forall\sigma.$$
			 As $S'(\sigma)-1<0$, this is equivalent to
			$1+\left|k\right|> \lambda_{j,k} 2^{j\frac{S'(\sigma)}{S'(\sigma)-1}}\;\; \forall\sigma.$ 
			On the other hand, since $S(\sigma)- \sigma \longrightarrow -\infty$ when $\sigma \longrightarrow +\infty$ we can  select   $\overline{\sigma}$, sufficiently large, such that $$S(\overline{\sigma})-\overline{\sigma}-s_0+\sigma_0+\varepsilon<0.$$ 
			In particular
			\begin{eqnarray}
			\nonumber
			\left|c_{j,k}\right|=\mathcal{C}_{j,k}\;. \displaystyle{\inf_{\sigma\in \mathbb{R}}  \left\{~2^{-jS(\sigma)}\left(\frac{1+\left|k\right|}{\lambda_{j,k}}\right)^{S (\sigma)-\sigma}\right\}}\leq \mathcal{C}_{j,k} \; \left( 2^{-j}\right) ^{S(\overline{\sigma})}\left(\frac{1+\left|k\right|}{\lambda_{j,k}}\right)^{S(\overline{\sigma})-\overline{\sigma}}.
			\end{eqnarray}
			
			Therefore, to prove (\ref{pertenecia-eps1}) we only need to show  that 
		$$\frac{\mathcal{C}_{j,k} \; \left( 2^{-j}\right)^{S(\overline{\sigma})}\left(\frac{1+\left|k\right|}{\lambda_{j,k}}\right)^{S(\overline{\sigma})-\overline{\sigma}}
			}{2^{-j (s_0-\varepsilon)}\;(1 + | k| )^{s_0-\varepsilon-\sigma_0}}= \hspace{7
			cm}$$
\begin{equation}
	\label{expresion_acotada1}
	 = \mathcal{C}_{j,k} \; \left( 2^{-j}\right) ^{S(\overline{\sigma})-(s_0-\varepsilon)} \left(1+\left|k\right|\right)^{S(\overline{\sigma})-\overline{\sigma}-s_0+\sigma_0+\varepsilon} {\lambda_{j,k}}^{\overline{\sigma}-S(\overline{\sigma})},
\end{equation}
	is bounded for $j\geq n_0$ and $ | k|<2^j$.
			
			Since $ 1+\left|k\right|> \lambda_{j,k} 2^{j\frac{S'(\overline{\sigma})}{S'(\overline{\sigma})-1}}$ and $S(\overline{\sigma})-\overline{\sigma}-s_0+\sigma_0+\varepsilon<0,$ formula
	 (\ref{expresion_acotada1}) is bounded by
			\begin{equation}
			\label{formula_acotada1}
			\mathcal{C}_{j,k} \; 2^{-j(S(\overline{\sigma})-(s_0-\varepsilon))} \left(2^{j \frac{S'(\overline{\sigma})}{S'(\overline{\sigma})-1}}\right)^{S(\overline{\sigma})-\overline{\sigma}+\sigma_0-s_0+\varepsilon}\;(\lambda_{j,k})^{\sigma_0-s_0+\varepsilon}.
			\end{equation}

			\item Finally, let
			$$|c_{j,k}|=\mathcal{C}_{j,k} \;\displaystyle{\lim_{\sigma\rightarrow -\infty} (2^{-j})^{S(\sigma)} \left(\frac{1+\left|k\right|}{\lambda_{j,k}}\right)^{S(\sigma)-\sigma}} \text{ with } 1+\left|k\right|< \lambda_{j,k} 2^{j\frac{S'(\sigma)}{S'(\sigma)-1}}\;\; \forall\sigma. $$
			Similarly as before we can select $\overline{\sigma}$, negatively large,  such that $$S(\overline{\sigma})-\overline{\sigma}-s_0+\sigma_0+\varepsilon>0, \text{ and thus }$$ 
			$$\left(1+\left|k\right|\right)^{S(\overline{\sigma})-\overline{\sigma}-s_0+\sigma_0+\varepsilon}< \left(\lambda_{j,k} 2^{j\frac{S'(\overline{\sigma})}{S'(\overline{\sigma})-1}}\right)^{S(\overline{\sigma})-\overline{\sigma}-s_0+\sigma_0+\varepsilon}.$$
			
			Therefore,	$ \dfrac{|c_{j,k}|}{ 2^{-j (s_0-\varepsilon)}\;(1 + | k  | )^{s_0-\varepsilon-\sigma_0}}$ is bounded by
			
			\begin{equation}
			\label{formula_acotada2}
			\mathcal{C}_{j,k} \; 2^{-j(S(\overline{\sigma})-(s_0-\varepsilon))} \left(2^{j \frac{S'(\overline{\sigma})}{S'(\overline{\sigma})-1}}\right)^{S(\overline{\sigma})-\overline{\sigma}+\sigma_0-s_0+\varepsilon}\;(\lambda_{j,k})^{\sigma_0-s_0+\varepsilon}.
			\end{equation}

		\end{itemize}
		
		Therefore, 
		we have to show that (\ref{formula_acotada}), (\ref{formula_acotada1}) and (\ref{formula_acotada2}) are bounded. Since (\ref{formula_acotada1}) and (\ref{formula_acotada2}) are equivalent to  (\ref{formula_acotada}), with $\sigma_{j,k}= \overline{\sigma}$ for all $j,k$  because $\overline{\sigma}$ only depends on $\sigma_0$, $s_0$ and $\varepsilon$, in each case, we have to prove that, for all $j\geq n_0$ and $ | k|<2^j$, formula (\ref{formula_acotada}) or its equivalent
		$$\mathcal{C}_{j,k} \;\;(\lambda_{j,k})^{(\sigma_0-s_0+\varepsilon)}\; 2^{j\left[\frac{S'(\sigma_{j,k})(\sigma_0-\sigma_{j,k})+S(\sigma_{j,k})-s_0+\varepsilon}{S'(\sigma_{j,k})-1}\right]}, \text{ is bounded.}$$
		Taking $\log_2(\cdot)$, we obtain
		$$\log_2(\mathcal{C}_{j,k} ) +\log_2(\lambda_{j,k})\;(\sigma_0-s_0+\varepsilon)+ j\left[\frac{S'(\sigma_{j,k})(\sigma_0-\sigma_{j,k})+S(\sigma_{j,k})-s_0+\varepsilon}{S'(\sigma_{j,k})-1}\right]$$
		or,
		\begin{equation}
		\label{expresion_acotada}
		j\left[\frac{\log_2(\mathcal{C}_{j,k})}{j} +\frac{\log_2(\lambda_{j,k})}{j}\;(\sigma_0-s_0+\varepsilon)+\frac{S'(\sigma_{j,k})(\sigma_0-\sigma_{j,k})+S(\sigma_{j,k})-s_0+\varepsilon}{S'(\sigma_{j,k})-1}\right].
		\end{equation}
		
Since $|k|<2^j$, by hypothesis 
		$$\displaystyle {\varlimsup_{\substack{j\rightarrow +\infty}}{\left(\frac{\log_2\left(\mathcal{C}_{j,k}\right) }{j} +(\sigma_0-s_0+\varepsilon) \; \frac{\log_2\left(\lambda_{j,k}\right) }{j}\right)}\;\leq 0}.$$ 
		Further,  Lema \ref{lema-tecnico}  gives 
		$$\frac{S'(\sigma_{j,k})(\sigma_0-\sigma_{j,k})+S(\sigma_{j,k})-s_0+\varepsilon}{S'(\sigma_{j,k})-1}<B <0	\;\;\text{ for all } \sigma_{j,k}.  $$
		
Hence,
		$$\varlimsup_{\substack{j\rightarrow +\infty}}\log_2\left(\mathcal{C}_{j,k} \;\;(\lambda_{j,k})^{(\sigma_0-s_0+\varepsilon)}\; 2^{j\left[\frac{S'(\sigma_{j,k})(\sigma_0-\sigma_{j,k})+S(\sigma_{j,k})-s_0+\varepsilon}{S'(\sigma_{j,k})-1}\right]}\right) =-\infty$$		
which implies that there exists $M>0$ such that $$0<\mathcal{C}_{j,k} \;(\lambda_{j,k})^{(\sigma_0-s_0+\varepsilon)}\; 2^{j\left[\frac{S'(\sigma_{j,k})(\sigma_0-\sigma_{j,k})+S(\sigma_{j,k})-s_0+\varepsilon}{S'(\sigma_{j,k})-1}\right]}<M,$$
for $ j $ and $k \in {\mathbb Z}$ such that, $j\geq 0, \; | k| < 2^j$.

\end{proof}

In Theorem \ref{teo_gral_2ml_version4} we state sufficient conditions to define a funtion (or a distribution) $f$ that has $S(\sigma)$ as its 2-microlocal frontier at $x_0$, for a downward concave function with $S''(\sigma)<0$. Note that the requirement that $S''(\sigma)<0$ excludes the possibility that $S(\sigma)$ is a line.

For the case that $S(\sigma)$ is a line we have a more general theorem, giving necessary and sufficient conditions for a function (or distribution) to have that line as its prescribed 2-microlocal frontier.

\begin{theorem}
	\label{teo_lineal_caracterizacion}
	Let $S(\sigma )$ be the line $S(\sigma )= \alpha + \frac{\gamma}{1-\gamma} (\alpha - \sigma)$, with $0 \leq \gamma < 1$.
	Let $f$ be the function (or the distribution)  defined by its wavelet expansion as
	
	\begin{equation}
	\nonumber
	f(x)=\sum_{j\geq 0}\;\sum_{\substack{k\in \mathbb{Z},\; |k-2^jx_0|<2^j}}~~c_{j,k} \; \psi (2^{j}x-k), 
	\end{equation}
where $\psi$ is any wavelet in the Schwartz class with infinitely vanishing moments.

The 2-microlocal frontier of $f$ at $x_0$ is $S(\sigma)$  \textbf{if and only if} for $j\geq0$ and $|k-2^jx_0|<2^j$,
	\begin{equation}
	\label{coef_lineal_generalizacion}
	|c_{j,k}|\leq \mathcal{C}_{j,k}\;. \displaystyle{\inf_{\sigma\in \mathbb{R}}  \left\{~2^{-jS(\sigma)}\left(\frac{1+\left|k-2^jx_0\right|}{\lambda_{j,k}}\right)^{S (\sigma)-\sigma}\right\} }	
\end{equation}
with $\mathcal{C}_{j,k} $ and  $\lambda_{j,k} $  positive sequences such that: 
	\begin{enumerate}[(i)]
	\item For any  $C\in\mathbb{R}$ 
		$$\displaystyle {\varlimsup_{\substack{j\rightarrow +\infty}}{\left(\frac{\log_2\left(\mathcal{C}_{j,k_j}\right) }{j} +C \; \frac{\log_2\left(\lambda_{j,k_j}\right) }{j}\right)}\;\leq 0}.$$
		
	\item There exists a sequence $(j_n, k_n)$,  with $j_n$ strictly increasing, such that for $(j_n,k_n)$ the \textbf{equality}  holds in  (\ref{coef_lineal_generalizacion}), with	$c_{j_n,k_n}\neq0$, and 
	$$\displaystyle {\lim_{\substack{n\rightarrow +\infty}}\frac{\log_2\left(\mathcal{C}_{j_n,k_{n}}\right) }{j_n}= 0}  \;\;\text{  and  } \;\;\displaystyle {\lim_{\substack{n\rightarrow +\infty}}\frac{\log_2\left(\lambda_{j_n,k_{n}}\right) }{j_n}= 0}.$$
	\end{enumerate}
	
\end{theorem}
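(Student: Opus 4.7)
My plan is to prove both directions of the equivalence. The organizing observation is that, by Lemma~\ref{lema_calculo_inf} (equivalently formula~(\ref{formula_inf_lineal})), for a linear $S$ the infimum in (\ref{coef_lineal_generalizacion}) is either $0$ or equals $2^{-j\alpha}$, the nonzero value occurring precisely when $(1+|k-2^jx_0|)/\lambda_{j,k}=2^{j\gamma}$. This rigid dichotomy replaces the smooth one-parameter family used in Theorem~\ref{teo_gral_2ml_version4} and puts most of the work into the necessity construction, while making the sufficiency argument very direct.

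For the sufficiency direction, I fix a point $(\sigma_0,s_0)$ on the line (so $(1-\gamma)s_0+\gamma\sigma_0=\alpha$) and verify, for every $\varepsilon>0$, (A) $f\notin C_{x_0}^{s_0+\varepsilon,\,\sigma_0-(s_0+\varepsilon)}$ and (B) $f\in C_{x_0}^{s_0-\varepsilon,\,\sigma_0-(s_0-\varepsilon)}$. For (B), whenever $c_{j,k}\neq 0$ the hypothesis forces the infimum to be nonzero, so $1+|k-2^jx_0|=\lambda_{j,k}\,2^{j\gamma}$ and $|c_{j,k}|\leq\mathcal{C}_{j,k}\,2^{-j\alpha}$; substituting these and using $(1-\gamma)s_0+\gamma\sigma_0=\alpha$ collapses the quotient $|c_{j,k}|/\bigl(2^{-j(s_0-\varepsilon)}(1+|k-2^jx_0|)^{s_0-\varepsilon-\sigma_0}\bigr)$ to $\mathcal{C}_{j,k}\,\lambda_{j,k}^{\sigma_0-s_0+\varepsilon}\,2^{-j(1-\gamma)\varepsilon}$, whose $\log_2/j$ has strictly negative limsup by (i) with $C=\sigma_0-s_0+\varepsilon$, giving uniform boundedness. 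For (A), I substitute the assumed embedding bound into the equality of (ii); the same algebra yields $\log_2\mathcal{C}_{j_n,k_n}/j_n\leq -(1-\gamma)\varepsilon+(s_0+\varepsilon-\sigma_0)\log_2\lambda_{j_n,k_n}/j_n+o(1)$, and passing to the limit using the two zero limits in (ii) gives $0\leq -(1-\gamma)\varepsilon$, a contradiction.

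For the necessity direction I construct explicit sequences. Set $\lambda_{j,k}=(1+|k-2^jx_0|)\,2^{-j\gamma}$, which forces the infimum to equal $2^{-j\alpha}$ at every admissible $(j,k)$; set $\mathcal{C}_{j,k}=|c_{j,k}|\,2^{j\alpha}$ when $c_{j,k}\neq 0$ and $\mathcal{C}_{j,k}=2^{-j^{2}}$ otherwise, securing positivity while making (\ref{coef_lineal_generalizacion}) an equality at every $(j,k)$ where $c_{j,k}\neq 0$. To verify (i), put $\tau_{j,k}=\log_2(1+|k-2^jx_0|)/j\in[0,1]$; on sequences where $c_{j,k_j}=0$ the contribution $\log_2\mathcal{C}_{j,k_j}/j\to-\infty$ dominates trivially, while on sequences where $c_{j,k_j}\neq 0$ I invoke $f\in C_{x_0}^{s,\sigma-s}$ for $s<S(\sigma)$ with the choice $\sigma=s+C$ (which requires $s<\alpha-C\gamma$), obtaining $\log_2\mathcal{C}_{j,k_j}/j+C\log_2\lambda_{j,k_j}/j\leq -s+\alpha-C\gamma+o(1)$, so letting $s\uparrow\alpha-C\gamma$ yields the required $\limsup\leq 0$. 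To produce the sequence in (ii), I exploit that $(\alpha+1/n,-1/n)$ lies strictly above the frontier (since $S(\alpha)=\alpha$), so $f\notin C_{x_0}^{\alpha+1/n,\,-1/n}$ and I may select $(j_n,k_n)$ with $j_n$ strictly increasing and $|c_{j_n,k_n}|>n\cdot 2^{-j_n(\alpha+1/n)}(1+|k_n-2^{j_n}x_0|)^{-1/n}$, giving $\liminf\log_2|c_{j_n,k_n}|/j_n\geq-\alpha$.

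The main obstacle is showing that along this sequence $\tau_{j_n,k_n}\to\gamma$, which is what converts the one numerical lower bound into the two zero limits required in~(ii). Suppose, for contradiction, a subsequence has $\tau_{j_n,k_n}\to\tau^{*}\neq\gamma$; combining $f\in C_{x_0}^{s,\sigma-s}$ for $s<S(\sigma)$ with $\log_2|c_{j,k}|/j\leq -s+(s-\sigma)\tau_{j,k}+o(1)$ and using that along the line the map $\sigma\mapsto -S(\sigma)+(S(\sigma)-\sigma)\tau^{*}$ is affine in $\sigma$ with slope $-M+(M-1)\tau^{*}$ where $M=-\gamma/(1-\gamma)$, one checks that this slope vanishes exactly when $\tau^{*}=\gamma$. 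Hence for $\tau^{*}\neq\gamma$ the bound can be driven to $-\infty$ by sending $\sigma$ to $+\infty$ or $-\infty$ in the appropriate direction, contradicting $\liminf\log_2|c_{j_n,k_n}|/j_n\geq-\alpha$. This pins down $\tau_{j_n,k_n}\to\gamma$, and the matching upper bound at $\tau=\gamma$ then forces $\log_2|c_{j_n,k_n}|/j_n\to-\alpha$, closing the necessity argument.
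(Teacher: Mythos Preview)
Your argument is correct. The sufficiency direction mirrors the paper's proof (which in turn defers to Theorem~\ref{teo_gral_2ml_version4}): you exploit that for linear $S$ the infimum in~(\ref{coef_lineal_generalizacion}) is either $0$ or $2^{-j\alpha}$, and the algebra reducing the $C_{x_0}^{s,s'}$ quotient to $\mathcal{C}_{j,k}\,\lambda_{j,k}^{\sigma_0-s_0\pm\varepsilon}\,2^{\mp j(1-\gamma)\varepsilon}$ is exactly the computation the paper carries out.

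For the necessity, your construction of $\mathcal{C}_{j,k}$ and $\lambda_{j,k}$ and your verification of~(i) coincide with the paper's (the paper uses a different $\lambda_{j,k}$ on the null set $\{c_{j,k}=0\}$, but your uniform choice $\lambda_{j,k}=(1+|k-2^jx_0|)2^{-j\gamma}$ with $\mathcal{C}_{j,k}=2^{-j^2}$ there works just as well). Where you genuinely diverge is in producing the sequence for~(ii). The paper argues existentially: it supposes, toward a contradiction, that every sequence $(j,k_j)$ has $\varlimsup \log_2\mathcal{C}_{j,k_j}/j$ bounded away from~$0$, derives membership in some $C_{x_0}^{s_0+\varepsilon,\sigma_0-s_0-\varepsilon}$, and then extracts the $\lambda$-limit from~(i) by a further subsequence. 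You instead build the sequence constructively by testing non-membership at the explicit parameters $(s,s')=(\alpha+1/n,-1/n)$ converging to the fixed point $S(\alpha)=\alpha$, and you identify $\tau_{j_n,k_n}\to\gamma$ through the affine calculation that the slope $-M+(M-1)\tau^*$ vanishes precisely at $\tau^*=\gamma$. Your route is more direct and yields the two limits in~(ii) simultaneously, rather than sequentially via~(i); the paper's route is shorter to state but leans on an abstract contradiction.

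One cosmetic slip: in your extracted inequality the exponent on $1+|k_n-2^{j_n}x_0|$ should be $+1/n$ (since $-s'=1/n$), not $-1/n$. This is harmless because the corresponding term $(1/n)\tau_{j_n,k_n}$ is $o(1)$ regardless of sign, so your conclusion $\varliminf \log_2|c_{j_n,k_n}|/j_n\geq -\alpha$ stands. You should also note explicitly that $j_n$ can be chosen strictly increasing (and, if you keep the factor $n$, growing fast enough that $\log_2 n/j_n\to 0$); this is immediate since each non-membership fails for infinitely many~$j$.
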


\begin{remark}
\label{Obs_coef_lineal_generalizacion}
Note that the sufficiency in this last Theorem includes a special case of Theorem 2.1. if in that Theorem we allowed $S(\sigma)$ to be a line. In that Theorem, 
there are always infinite $c_{jk} \not= 0$. But for the case of a line we have to explicitly require it.

\end{remark}
\begin{proof}
Without loss of generality we consider  $x_0=0$.
\\
$\Leftarrow )$ The proof is similar and even simpler than the one  given in  Theorem \ref{teo_gral_2ml_version4}. 
Since we have the hypothesis that $c_{j_n,k_n}\neq 0$ and  $$\left| c_{j_n,k_n}\right| =\displaystyle{\inf_{\sigma\in \mathbb{R}}  \left\{~2^{-j_nS(\sigma)}\left(\frac{1+\left|k_n\right|}{\lambda_{j_n,k_n}}\right)^{S (\sigma)-\sigma}\right\}},$$  
by formula (\ref{formula_inf_lineal}), we have
$|c_{j_n,k_n}| = \mathcal{C}_{j_n,k_n}\; 2^{-j\alpha}$ with  $ 1+|k_n| = \lambda_{j_n,k_n} 2^{j_n \gamma}.$ The equality
$1+|k_n| = \lambda_{j_n,k_n} 2^{j_n \gamma}$ is equivalent to
$$\left(2^{-j_n}\frac{1+\left|k_n\right|}{\lambda_{j_n,k_n}}\right)^{S'(\sigma_0)}=\frac{1+\left|k_n\right|}{\lambda_{j_n,k_n}},$$
and thus formula (\ref{formula_renglon1}), in the proof of Theorem \ref{teo_gral_2ml_version4},  is satisfied for $\sigma_n= \sigma_0$.

Also in this case the arguments given in Theorem \ref{teo_gral_2ml_version4} for the set $I$, defined in formula (\ref{Ijk}), can be adapted to the set  $\left\lbrace (j_n, k_n)\right\rbrace_{n\in\mathbb{N}}$ of (\textit{ii}).

$\Rightarrow$) For the necessity we first have to prove that the wavelet coefficients of the function $f$ can be described as in (\ref{coef_lineal_generalizacion}). For simplicity we consider the equality, that is to reformulate the given $| c_{j,k}|$ as 
\begin{equation}
\nonumber
\mathcal{C}_{j,k}. \inf_{\sigma\in \mathbb{R}}  \left\{~2^{-jS(\sigma)}\left(\frac{1+|k|}{\lambda_{j,k}}\right)^{S (\sigma)-\sigma}\right\} =
\left\{
	\begin{array}{ccc} \mathcal{C}_{j,k} \;2^{-j \alpha}  &\text{ for }&    1+|k| = \lambda_{j,k} 2^{j \gamma}
	 \\\\
	0  &\text{ for }&   1+|k| \neq \lambda_{j,k} 2^{j \gamma}.
\end{array}
\right.
\end{equation}
For  $|c_{j,k}|\neq 0$, it is enough to select
\begin{equation}
\label{formula_lambda_y_Cal_nonulos}
\lambda_{j,k}=\frac{1+|k|}{2^{j\gamma}} \quad \text{ and }\quad \mathcal{C}_{j,k}=\frac{|c_{j,k}|}{2^{-j\gamma}}.
\end{equation}

And, for $|c_{j,k}|=0$ it is sufficient to choose
$\lambda_{j,k}\neq\frac{1+|k|}{2^{j\gamma}} $ to be zero in  (\ref{coef_lineal_generalizacion}), e.g. 
\begin{equation}
\label{cjk_nulos}
\lambda_{j,k}=2^{j} \quad \text{ and }\quad \mathcal{C}_{j,k}=2^{-j^2}.
\end{equation}

Let us prove that 	$\mathcal{C}_{j,k} $ and  $\lambda_{j,k} $ satisfy conditions (\textit{i}) and (\textit{ii}).
\\\\
(\textit{i})  For   $c_{j,k}= 0$ condition (\textit{i}) is trivial. 
	
For  $c_{j,k}\neq 0$, since the 2-microlocal frontier of the function $f$ is $S(\sigma)$ we have $S(\sigma_0)= \sup \{s : f \in {C}_{x_{0}}^{s,\sigma_0-s}\}$ for all $\sigma_0\in \mathbb{R}$.  This means that for all $\varepsilon>0$ and  $(\sigma_0, s_0)\in \text{Graph}(S)$,
	\begin{center}
		1) 	$f\notin { C}_{0}^{s_0+\varepsilon, \sigma_0-(s_0+\varepsilon)}$ \quad and \quad 2)	$f\in { C}_{0}^{s_0-\varepsilon, \sigma_0-(s_0-\varepsilon)}$.
	\end{center}
By 2),  for  $j\geq0$, $|k|<2^j$, there exists $C>0$ such that
	$$\frac{|c_{j,k|} 
	}{2^{-j (s_0-\varepsilon)}\;(1 + | k| )^{s_0-\varepsilon-\sigma_0}}\leq C.$$
Thus, by the definitions of (\ref{formula_lambda_y_Cal_nonulos}),
\begin{equation}\label{formula2cjk}
\frac{\mathcal{C}_{j,k}2^{-j\alpha}} 
	{2^{-j (s_0-\varepsilon)}\;{(\lambda_{j,k}2^{j\gamma})} ^{s_0-\varepsilon-\sigma_0}}\leq C.
\end{equation}
Since $s_0= S(\sigma_0 )= \alpha + \frac{\gamma}{1-\gamma} (\alpha - \sigma_0)$ i.e. $(1-\gamma)s_0-\alpha+\gamma \sigma_0=0$, (\ref{formula2cjk}) can be reformulated as 
	$$\mathcal{C}_{j,k}\;{\lambda_{j,k}} ^{-s_0+\varepsilon+\sigma_0}\;\; 2^{j\varepsilon(\gamma-1)}
	\leq C.$$
Taking $\log_2(\cdot)$ and dividing by $j$, we obtain
	$$\frac{\log_2(\mathcal{C}_{j,k})}{j}+\frac{\log_2(\lambda_{j,k})}{j} (\sigma_0-s_0+\varepsilon)+ \varepsilon(\gamma-1)
	\leq \frac{\log_2(C)}{j}.$$
And thus, taking  $\varepsilon\longrightarrow0$ and letting $j\longrightarrow +\infty$, we have
	$$\displaystyle {\varlimsup_{\substack{j\rightarrow +\infty}}{\left(\frac{\log_2\left(\mathcal{C}_{j,k_j}\right) }{j} +  \; (\sigma_0-s_0)\frac{\log_2\left(\lambda_{j,k_j}\right) }{j}\right)}\;\leq 0},$$
and since $\sigma_0-s_0$ can take any real value, we obtain condition (\textit{i}).
\\\\
(\textit{ii}) Let $J:=\{(j,k):1+|k| = \lambda_{j,k} 2^{j \gamma}, j\geq 0, |k|<2^j \}$. 

By (\textit{i}), taking $C=0$, we have
	$$\displaystyle {\varlimsup_{\substack{j\rightarrow +\infty}}{\frac{\log_2\left(\mathcal{C}_{j,k_j}\right) }{j}} \leq 0},\;\; \text{ for } (j,k_j)\in J.$$
	
We need to construct a sequence $(j_n,k_n) \in J$ such that $j_n$ is strictly increasing and
$$\displaystyle {\lim_{\substack{n\rightarrow +\infty}}\frac{\log_2\left(\mathcal{C}_{j_n,k_{n}}\right) }{j_n}= 0}  \;\;\text{  and  } \;\;\displaystyle {\lim_{\substack{n\rightarrow +\infty}}\frac{\log_2\left(\lambda_{j_n,k_{n}}\right) }{j_n}= 0}.$$ 
For this we will show that there is a sequence $(j,k_j)\in J$ such that,
$$\displaystyle {\varlimsup_{\substack{j\rightarrow +\infty}}{\frac{\log_2\left(\mathcal{C}_{j,k_j}\right) }{j} }}=0.$$

For, assume that for all $(k_j)_{j\in\mathbb{N}}$ such that  $(j,k_j)\in J$ there exists $\delta<0 $ such that
$$\displaystyle {\varlimsup_{\substack{j\rightarrow +\infty}}{\frac{\log_2\left(\mathcal{C}_{j,k_j}\right) }{j} }}= \delta\left( (k_j)_{j\in\mathbb{N}}\right)<\delta<0.$$
	
Taking $\varepsilon>0$ such that $$\delta<\varepsilon (\gamma-1)<0 \quad i.e.\quad \varepsilon< \dfrac{\delta}{\gamma-1},$$
we have 
	\begin{equation}\label{calC_jk_desig}
	\frac{\log_2\left(\mathcal{C}_{j,k_j}\right) }{j}< \delta< \varepsilon (\gamma-1)\quad i.e. \quad \mathcal{C}_{j,k_j} < 2^{j \varepsilon (\gamma-1)},
	\end{equation}
for all $j\geq j_0$ and   all $(k_j)_{j\in\mathbb{N}}$ such that $(j,k_j)\in J$.
	
Since $(1-\gamma)s_0=\alpha-\gamma \sigma_0$ we have $$\varepsilon (\gamma-1)=\alpha-(s_0+\varepsilon)+\gamma(s_0+\varepsilon-\sigma_0),$$ and   (\ref{calC_jk_desig}) can be reformulated as
	\begin{equation}\label{calC_desigualdad}
\mathcal{C}_{j,k_j}2^{-j\alpha} <  \;2^{-j(s_0+\varepsilon)}\;(2^{j\gamma})^{s_0+\varepsilon-\sigma_0}. 
	\end{equation}
	
Hence, for $ c_{j,k}\neq0$ i.e.  $|c_{j,k}| = \mathcal{C}_{j,k} \;2^{-j \alpha}$ and $ 1+|k_j| = \lambda_{j,k_j} 2^{j \gamma}$, formula (\ref{calC_desigualdad}) is equivalent to
	$$|c_{j,k_j}|<2^{-j(s_0+\varepsilon)}\;{\left( \frac{1+\left|k_j\right|}{\lambda_{j,k_j}}\right)}^{ s_0+\varepsilon-\sigma_0}= 2^{-j(s_0+\varepsilon)}\;{\left( 1+\left|k_j\right|\right)}^{ s_0+\varepsilon-\sigma_0} \lambda_{j,k_j}^{ \sigma_0-s_0-\varepsilon}.$$
Thereby, for $\varepsilon< \dfrac{\delta}{\gamma-1}$  and $(\sigma_0,s_0)$ such that $\sigma_0-s_0-\varepsilon=0$ we have 
	$$|c_{j,k_j}|< 2^{-j(s_0+\varepsilon)}\;{\left( 1+\left|k_j\right|\right)}^{ s_0+\varepsilon-\sigma_0}, $$
for all $j\geq j_0$ and all sequences  $(k_j)_{j\in\mathbb{N}}$,  $(j,k_j)\in J$.
	
In the case $ c_{j,k}=0$ the last inequality is obvious. And thus, there exists $C>0$ such that
	$$|c_{j,k}| \leq C 2^{-j (s_0 +\varepsilon)}\;(1 + | k  | )^{s_0 +\varepsilon-\sigma_0},$$
for all $ j\geq 0 $ and $k \in {\mathbb Z}$: $ | k| < 2^j$, which means that  $f\in { C}_{0}^{s_0+\varepsilon, \sigma_0-(s_0+\varepsilon)}$. This fact contradicts the hypothesis stated in 1) and the  contradiction arises because we assumed that 
$\displaystyle {\varlimsup_{\substack{j\rightarrow +\infty}}{\frac{\log_2\left(\mathcal{C}_{j,k_j}\right) }{j} }}= \delta\left( (k_j)_{j\in\mathbb{N}}\right)<\delta<0,$
for all $(k_j)_{j\in\mathbb{N}}$,  $(j,k_j)\in J$. Consequently, we can choose $(j_m,k_m)\in J$ such that 
	$$\displaystyle {\lim_{\substack{m\rightarrow +\infty}}\frac{\log_2\left(\mathcal{C}_{j_m,k_{m}}\right) }{j_m}= 0}.$$
Furthermore, since we have 
	$$\displaystyle {\varlimsup_{\substack{j\rightarrow +\infty}}{\left(\frac{\log_2\left(\mathcal{C}_{j,k_j}\right) }{j} +  \; C\;\frac{\log_2\left(\lambda_{j,k_j}\right) }{j}\right)}\;\leq 0},$$ 
for all $(k_j)_{j\in\mathbb{N}}$,  $(j,k_j)\in J$ and for all $C$,
we obtain
	$$\displaystyle {\varlimsup_{\substack{m\rightarrow +\infty}}\left(\frac{\log_2\left(\mathcal{C}_{j_m,k_{m}}\right) }{j_m}+ C\;\frac{\log_2\left(\lambda_{j_m,k_{m}}\right) }{j_m}\right)\leq 0} \quad \text{ for all } C.$$

Thus,  $$\displaystyle {\varliminf_{\substack{m\rightarrow +\infty}} C\;\frac{\log_2\left(\lambda_{j_m,k_{m}}\right) }{j_m}\;\leq 0} \quad \text{ for all } C,$$
which is only valid if
	$\displaystyle {\varliminf_{\substack{m\rightarrow +\infty}}\;\frac{\log_2\left(\lambda_{j_m,k_{m}}\right) }{j_m}= 0}.$  Hence, there exists a subsequence  $(j_{m_n}, k_{m_n})\in J$ such that 
	$${\lim_{\substack{n\rightarrow +\infty}}\;\frac{\log_2\left(\lambda_{j_{m_n},k_{m_n}}\right) }{j_{m_n}}= 0}.$$
Therefore, choosing $(j_n,k_n)=(j_{m_n},k_{m_n})$ both conditions of (\textit{ii}) are satisfied.
			
 \end{proof}

\subsection{ Connecting the main theorem with previous results about 2-microlocal frontier prescription}
Theorem \ref {teo_gral_2ml_version4} generalizes Proposition \ref {GJLV98} since if we select $\mathcal{C}_{j,k}= 1 $,  $\;\lambda_{j,k}=1 $ and in formula (\ref{coef_resultado_ppal_version5}) we only consider equality, we obtain that proposition. 

We now consider the results  proposed in \cite{Meyer1998} and \cite{LevySeuret2004}. We will  examine how they can be adapted to the formula stated in Theorem \ref{teo_gral_2ml_version4}.

Y. Meyer develops his result in the  $(s, s')$ plane and  considers the 2-microlocal as the set of $(s, s')$ such that $ s = A (s') $.
In \cite{Meyer1998} the following theorem is proved:
\begin{theorem} {\small{(Meyer, 1998)}}
	Let $A:\mathbb{R}\longrightarrow
	\mathbb{R}$ be a concave downwards Lipschitz function which is decreasing on the real line with   $-1\leq \frac{dA}{dt}(t)\leq 0$. Let $E$ be the planar set defined by $s\leq A(s')$, $(s,s')\in\mathbb{R}^2$.
	Then there exists a function $f$ defined on a neighbourhood of $x_0$ such that the 2-microlocal domain of $f$ at $x_0$ (see formula (\ref{dom2ml}) ) is $E$. 
	
\end{theorem}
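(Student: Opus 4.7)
The plan is to deduce Meyer's theorem from Theorem \ref{teo_gral_2ml_version4} (and, where a linear arc is present, Theorem \ref{teo_lineal_caracterizacion}) by changing coordinates. In Meyer's formulation the frontier is the curve $s=A(s')$ in the $(s,s')$ plane, whereas the paper's main theorems are phrased in the $(\sigma,s)$ plane with $\sigma=s+s'$. Set $\phi(s')=A(s')+s'$; by hypothesis $0\leq\phi'(s')\leq 1$, so $\phi$ is non-decreasing and, wherever $A'>-1$, strictly increasing and hence invertible. Define
\begin{equation*}
S(\sigma)=A\bigl(\phi^{-1}(\sigma)\bigr).
\end{equation*}
The chain rule gives
\begin{equation*}
S'(\sigma)=\frac{A'(s')}{A'(s')+1}\leq 0,\qquad S''(\sigma)=\frac{A''(s')}{(A'(s')+1)^{3}}\leq 0,
\end{equation*}
so the transcribed frontier $S$ is decreasing and concave downward, exactly matching the hypotheses of the main theorems. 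Moreover the bijective correspondence between the curves $s=A(s')$ and $s=S(\sigma)$ with $\sigma=s+s'$ implies that the 2-microlocal domain of $f$ at $x_0$ is $E$ if and only if the 2-microlocal frontier of $f$ at $x_0$ (in the $(\sigma,s)$ plane) coincides with $S$.

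Next I would apply Theorem \ref{teo_gral_2ml_version4} on every arc of $A$ that is strictly concave (so $S''<0$) with the simplest admissible choice $\mathcal{C}_{j,k}=1$ and $\lambda_{j,k}=1$ for all $j,k$. Conditions (i) and (ii) of Theorem \ref{teo_gral_2ml_version4} are then trivially satisfied, and defining
\begin{equation*}
c_{j,k}=\inf_{\sigma\in\mathbb{R}}\Bigl\{\,2^{-jS(\sigma)}\bigl(1+|k-2^{j}x_{0}|\bigr)^{S(\sigma)-\sigma}\Bigr\}
\end{equation*}
(with equality on the index set $I$ from (\ref{Ijk})) yields a function or distribution $f$ whose 2-microlocal frontier at $x_{0}$ is $S(\sigma)$; translating back this is precisely the prescribed curve $s=A(s')$. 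On any maximal linear arc of $A$ (where $A''=0$ and $S$ is a line), the analogous construction is covered by Theorem \ref{teo_lineal_caracterizacion}, which also admits $\mathcal{C}_{j,k}=\lambda_{j,k}=1$ together with an explicit sequence $(j_n,k_n)$ realising the equality. The two recipes glue, because formula (\ref{formula_inf_lineal}) shows that the same infimum already selects the correct leading term $(ab)^{d}$ on the linear portion, so a single wavelet expansion realises the whole frontier.

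The principal obstacle is the limited regularity of $A$. Meyer only assumes $A$ is Lipschitz, so $A''$ exists merely as a non-positive Borel measure and may be supported on a singular set; one must justify that the formal manipulations above still make sense at the finitely many (or countably many) transition points where strictly concave arcs meet linear ones, and that the choices of $\mathcal{C}_{j,k}$, $\lambda_{j,k}$, the partition $\mathbb{N}=\bigsqcup\Lambda_m$ and the dense sequence $\{r_m\}\subset\operatorname{Im}(S'/(S'-1))$ can be made consistently across arcs. A related subtlety arises where $A'(s')=-1$: there $\phi$ collapses intervals and $S$ acquires a vertical asymptote, so $S'/(S'-1)$ only reaches the endpoint $1$ in the limit; one must verify that the corresponding sequence $(j_n,k_n)$ in part (A) of the proof of Theorem \ref{teo_gral_2ml_version4} can still be constructed at the extremal frontier point so as to force equality (and not merely $\leq$) in the prescribed frontier. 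Once these boundary cases are handled, Meyer's theorem follows as a direct corollary of the results of Section 2.
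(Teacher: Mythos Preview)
The paper does not give its own proof of Meyer's theorem; it quotes the statement from \cite{Meyer1998} and then, in Section~2.3, shows that Meyer's particular construction (the sparse coefficients $c_{j,k}=2^{-j\tau_m}$ supported only on indices $(j,[2^{jp_m}])$, zero elsewhere) can be recast in the form (\ref{coef_resultado_ppal_version5}) with $\mathcal{C}_{j,k}=1$ and a specific non-constant $\lambda_{j,k}$. Thus the paper's contribution here is not an independent proof but the observation that Meyer's function lies in the class covered by Theorem~\ref{teo_gral_2ml_version4} --- and even this requires the extra hypothesis $S''<0$, which the paper leaves implicit.

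Your approach differs from the paper's in two respects. First, you take $\mathcal{C}_{j,k}=\lambda_{j,k}=1$, which is the Gui--Jaffard--L\'evy choice (Proposition~\ref{GJLV98}), not Meyer's; the paper instead recovers Meyer's own sparse coefficients by a tailored $\lambda_{j,k}$. This is harmless: under $S''<0$ either choice gives \emph{some} $f$ with the prescribed frontier, which is all Meyer's theorem asserts.

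The genuine gap is the gluing step. Theorems~\ref{teo_gral_2ml_version4} and~\ref{teo_lineal_caracterizacion} each produce a single function whose \emph{entire} 2-microlocal frontier at $x_0$ is the given curve; they say nothing about realising only an arc of a frontier. The frontier is a global invariant of $f$ at $x_0$, not something local in $\sigma$, so one cannot build a function with frontier $S$ by splicing together functions that each realise only a piece of $S$. Concretely, if $A$ is concave Lipschitz with both strictly concave portions and affine portions (or with $A''$ a singular measure, or with $A'=-1$ on an interval), neither theorem applies to $S$ as a whole, and your sentence ``the two recipes glue'' has no content: you have not defined a single set of coefficients $c_{j,k}$ and proved that the resulting $f$ has frontier $S$. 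Meyer's original argument handles the general concave Lipschitz case directly; deducing it from Section~2 is only possible under the additional hypothesis $S''<0$ (or $S$ globally linear), exactly the obstacle you identify but do not resolve.
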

He constructs a function $f$ 
whose  the wavelet coefficients satisfy
$$ c_{j,k}= 2^{-j\tau_m}~~~if~~~j\in{\Lambda}_m ~and~  k_j=\left[2^{jp_m}\right]  $$
and zero otherwise, where ${\Lambda}_m$ is defined as in (\ref{particion_N}) (see \cite{Meyer1998} for more details). The  2-microlocal domain of $f$ at $x_0=0$ is $E$, and therefore  $s= A(s')$ is the 2-microlocal frontier of $f$ at $0$. 

In our setting in the  $(\sigma,s)$ plane, the 2-microlocal frontier can be written as  
\begin{center}
	$S(\sigma)=s$  such that $s=A(\sigma-s)$.
\end{center}

Using the formula of Theorem \ref{teo_gral_2ml_version4}, by selecting  $\mathcal{C}_{j,k}=1$ and $\lambda_{j,k}$ such that

\begin{equation}
\nonumber
\lambda_{j,k}=
\left\{ 
\begin{array}{ccc}
\frac{1+\left|k\right|}{2^{jp_m}}&\textrm{ if }&  j\in{\Lambda}_m\; and \; k=\left[2^{jp_m}\right] \\
	\\
1&\;\;\;\;\;\;\;\;\textrm{ otherwise, }&
\end{array}
\right.
\end{equation}
we have that for  
 $j\in{\Lambda}_m$  and   $k=\left[2^{jp_m}\right]$,
$$c_{j,k}=\mathcal{C}_{j,k}\;. \inf_{\sigma\in \mathbb{R}}  \left\{~2^{-jS(\sigma)}\left(\frac{1+\left|k\right|}{\lambda_{j,k}}\right)^{S(\sigma)-\sigma}\right\}=2^{-j\tau_m}.
$$
And, for any other $j,k$, we choose
$$c_{j,k}=0\leq \mathcal{C}_{j,k}\;. \inf_{\sigma\in \mathbb{R}}  \left\{~2^{-jS(\sigma)}\left(\frac{1+\left|k\right|}{\lambda_{j,k}}\right)^{S (\sigma)-\sigma}\right\}.$$

Therefore the function $f$ proposed by Y. Meyer satisfies the conditions of Theorem  \ref{teo_gral_2ml_version4}. 

Finally, J. L\'evy Véhel and S. Seuret work in the $(s', \sigma)$ plane and thus the  2-microlocal frontier is an increasing function defined by  $\sigma=g(s')$. In  \cite{LevySeuret2004} they present the following theorem:

\begin{theorem}{\small{(J. L\'evy Véhel and S. Seuret, 2004)}}
	
	Let $g:\mathbb{R}\longrightarrow \mathbb{R}$ be a concave downwards, non-decreasing function, with slope between 0 and 1. Assume that $g(0) > 0$. There exists a function $f$ such that the 2-microlocal frontier of $ f$ at $0$
is $g(s')$.
\end{theorem}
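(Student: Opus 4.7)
The plan is to place L\'evy V\'ehel and Seuret's construction within the framework of Theorem \ref{teo_gral_2ml_version4}, in the same spirit as the Meyer reduction just carried out. First I would pass from the $(s', \sigma)$ plane used in the quoted theorem to the $(\sigma, s)$ plane used throughout this paper via the identity $s' = \sigma - s$. The prescribed frontier $\sigma = g(s')$ becomes $S(\sigma) = \sigma - g^{-1}(\sigma)$. The hypotheses on $g$ translate directly: $S'(\sigma) = 1 - (g^{-1})'(\sigma) \leq 0$ since $g' \in [0,1]$, $S''(\sigma) = -(g^{-1})''(\sigma) \leq 0$ since $g^{-1}$ is convex, and $S(0) > 0$ follows from $g(0) > 0$ together with the monotonicity of $g$. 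Strict concavity of $g$ gives $S'' < 0$ (so Theorem \ref{teo_gral_2ml_version4} applies); the affine case is handled by Theorem \ref{teo_lineal_caracterizacion} instead.

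Second, I would write down the explicit wavelet coefficients prescribed by L\'evy V\'ehel and Seuret. Their construction, in the spirit of Meyer's, relies on the partition of $\mathbb{N}$ in (\ref{particion_N}) and places non-zero coefficients only at scale-location pairs $(j, k_j)$ with $j \in \Lambda_m$ and $k_j = [2^{j p_m}]$, for a countable family of parameters $\{p_m\}$ dense in the interval $\mathrm{Im}(S'/(S'-1))$. The magnitudes of these non-zero coefficients are specific powers of $2^{-j}$, namely $2^{-j \tau_m}$ for suitable $\tau_m$, tuned so as to realize $S$ as the 2-microlocal frontier.

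Third, I would exhibit positive sequences $\mathcal{C}_{j,k}$ and $\lambda_{j,k}$ that match these coefficients to formula (\ref{coef_resultado_ppal_version5}). On the index set $I$ defined in (\ref{Ijk}) I would set $\lambda_{j,k_j} = (1+|k_j|)/2^{j p_m}$, which is bounded between $1$ and $2$ by the definition of the integer part, and take $\mathcal{C}_{j,k_j}$ equal to the ratio between $|c_{j,k_j}|$ and the value of the infimum in (\ref{coef_resultado_ppal_version5}); Lemma \ref{lema_calculo_inf} then guarantees that this infimum is attained at the $\sigma_m$ determined by $p_m = S'(\sigma_m)/(S'(\sigma_m)-1)$, and $\tau_m$ must therefore agree with $S(\sigma_m) + (S(\sigma_m)-\sigma_m) p_m$. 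Off $I$ I would take $\lambda_{j,k} = 1$ and $\mathcal{C}_{j,k} = 2^{-j^2}$, which forces the right-hand side of (\ref{coef_resultado_ppal_version5}) to vanish, consistently with $c_{j,k} = 0$ there. Condition (ii) of Theorem \ref{teo_gral_2ml_version4} is then immediate along $I$, since $\lambda_{j,k_j}$ is bounded and $\mathcal{C}_{j,k_j}$ is of exact order $1$; condition (i) holds off $I$ because of the $2^{-j^2}$ decay, which dominates any linear-in-$j$ perturbation by $\log_2(\lambda_{j,k})$. Invoking Theorem \ref{teo_gral_2ml_version4} yields the announced frontier.

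The main obstacle is the bookkeeping that matches L\'evy V\'ehel and Seuret's explicit parameters $\{p_m, \tau_m\}$ to the slope pattern $\{r_m\}$ and the infimum structure required by Theorem \ref{teo_gral_2ml_version4}: one must verify that their density condition on $\{p_m\}$ inside $\mathrm{Im}(S'/(S'-1))$ agrees with (\ref{rm}), and that the magnitude $2^{-j\tau_m}$ coincides (up to a factor that is subexponential in $j$, absorbed into $\mathcal{C}_{j,k_j}$) with the value at which the infimum in (\ref{coef_resultado_ppal_version5}) is realized. Once this dictionary is set up, everything else reduces to a direct application of Theorem \ref{teo_gral_2ml_version4}, and the 2-microlocal frontier of the resulting $f$ at $0$ is $S(\sigma)$, equivalently $\sigma = g(s')$ in the original parametrization.
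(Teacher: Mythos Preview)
Your overall strategy --- reduce the L\'evy V\'ehel--Seuret theorem to an application of Theorem \ref{teo_gral_2ml_version4} by exhibiting suitable $\mathcal{C}_{j,k}$ and $\lambda_{j,k}$ --- is exactly what the paper does. The change of variables $S(\sigma)=\sigma-g^{-1}(\sigma)$ is also the same as the paper's implicit description ``$S(\sigma)=s$ such that $\sigma=g(\sigma-s)$''. However, there is a genuine gap in your second step: you have misidentified the L\'evy V\'ehel--Seuret construction.

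Their construction is \emph{not} a sparse Meyer-type construction based on the partition $\mathbb{N}=\bigcup_m \Lambda_m$ with nonzero coefficients only at $(j,[2^{jp_m}])$. Rather, they set
\[
c_{j,k}=2^{-j\beta_{j,k}}\quad\text{for all }j>0,\ k\ge 0,
\]
where $\beta_{j,k}=\inf_{\rho\in E_{j,k}}\chi_0^j(\rho)$, $\chi_0^j(\rho)=\min\{j,-g^*(\rho)\}$, $g^*$ is the Legendre transform of $g$, and $E_{j,k}=\{\rho\in[0,1]: k=[2^{j(1-\rho)}]\}$. In particular every $c_{j,k}$ with $j>0$, $k\ge 0$ is nonzero, and the magnitudes are governed by the Legendre transform $g^*$, not by a discrete family of parameters $\{p_m,\tau_m\}$. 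Your proposed choices of $\mathcal{C}_{j,k}$ and $\lambda_{j,k}$, tailored to a sparse Meyer-type pattern, simply do not describe these coefficients.

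The paper's actual choices are built around the minimizer $\rho_{j,k}\in E_{j,k}$: one takes $\lambda_{j,k}=(1+|k|)/2^{j(1-\rho_{j,k})}$ (or with $\rho_{j,k}$ replaced by $g'(0)$ when $-g^*(\rho_{j,k})=+\infty$), and $\mathcal{C}_{j,k}$ equals $1$ when $j>-g^*(\rho_{j,k})$ and $2^{-jg^*(\rho_{j,k})}2^{-j^2}$ otherwise. With these choices one checks that the infimum in (\ref{coef_resultado_ppal_version5}) reproduces $2^{-j\beta_{j,k}}$. The verification of conditions (i) and (ii) then hinges on the behaviour of $\rho_{j,k}$ and $g^*(\rho_{j,k})$, not on the partition $\{\Lambda_m\}$. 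Your proof would go through once you replace the Meyer-style description by the Legendre-transform formula and redo the matching accordingly.
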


The construction of the function $f$ on  $[0,1]$ is as follows:

\begin{equation}\label{LevySeuret_coef_wav}
c_{j,k}= 2^{-j \beta_{j,k}} \quad \text{ for } j>0 \text{ and } k\geq 0,
\end{equation}

with $$ \beta_{j,k}=\inf_{\rho \in E_{j,k}}{\chi_0^j(\rho)} \quad\text{ and } \quad \chi_0^j(\rho)= \min\{j, -g^*(\rho)\},$$ where $g^*$ is the Legendre transform of $g$, and 
$$E_{j,k}=\{\rho: 0\leq \rho\leq 1 ~y~k= [ 2^{j(1-\rho)}] \}. $$

If we select appropriate sequences $\mathcal{C}_{j,k} $ and  $\lambda_{j,k}$  we can also describe the wavelet coefficients defined in formula (\ref{LevySeuret_coef_wav}), via the  formula  (\ref{coef_resultado_ppal_version5}) stated in Theorem \ref{teo_gral_2ml_version4}, if the function  $g$ satisfies the additional hypothesis of our theorem. We characterize  $\sigma=g(s')$,  in the  $(\sigma,s)$ plane, by defining  $S(\sigma)$ as $S(\sigma)=s$ such that $\sigma=g(\sigma-s).$

In this case, by choosing:

\begin{equation}\label{mathcalCjk}
\mathcal {C}_{j,k}=\left\{ 
\begin{array}{ccc}
1 & \textrm{ if }&j> -g^*(\rho_{j,k}) \\
\\

 2^{-j g^*(\rho_{j,k})}2^{-j^2} & \textrm{ if }&  j\leq -g^*(\rho_{j,k}) \text{ and }-g^*(\rho_{j,k}) \text{ is finite}
 \\
 \\
  2^{-j g^*(g'(0))}2^{-j^2}
 & \textrm{ if }&  j\leq -g^*(\rho_{j,k}) \text{ and }-g^*(\rho_{j,k})=+\infty,
\end{array}
\right.
\end{equation}

and

\begin{equation}\label{lambdajk}
\lambda_{j,k}=\left\{ 
\begin{array}{ccc}
\frac{1+|k|}{2^{j(1-\rho_{j,k})}} & \textrm{ if }&-g^*(\rho_{j,k}) \text{ is finite}
\\
\\
\frac{1+|k|}{2^{j(1-g'(0))}}
& \textrm{ if }& -g^*(\rho_{j,k})=+\infty,
\end{array}
\right.
\end{equation}

we have that $$c_{j,k}= 2^{-j \beta_{j,k}}=\mathcal{C}_{j,k}\;. \inf_{\sigma\in \mathbb{R}}  \left\{~2^{-jS(\sigma)}\left(\frac{1+\left|k\right|}{\lambda_{j,k}}\right)^{S(\sigma)-\sigma}\right\}, $$
for $j>0$ and $k\geq0$.
For $j=0$ or $k<0$ it  is sufficient to define
$c_{j,k}=0$ to verify the inequality  in (\ref{coef_resultado_ppal_version5}).

\section{Conclusions}

In this work we present a generic formula, based on wavelet coefficients, that provides a wide class of functions or distributions that have $S(\sigma)$ as the 2-microlocal frontier at $x_0$, where  $ S (\sigma) $ is a decreasing function defined on $\mathbb{R}$, such that  either $ S (\sigma) $ is concave downwards with $ S'' (\sigma) <0 $ or $ S (\sigma) $ is a line.
Each one of the functions constructed in \cite{ Meyer1998, GuiJaffardLevy1998, LevySeuret2004}  belongs to the family of functions we construct. The results could be useful to provide a model for signals that have  a specific type of pointwise singularity. 

Moreover, if $S(\sigma)$ is a line we prove that our condition is also necessary. In fact we conjecture that for $S(\sigma)$ a general downward concave function our sufficient conditions should be very close to be necessary.

Our theorems could be a starting point to determine which are the compatibility conditions satisfied by the different 2-microlocal frontiers of a function or distribution when the singularities at $x$ varies in $I$ an interval. This issue is still  unresolved and there is only a simultaneous prescription in \cite{LevySeuret2004}, on a countable dense set of points. Therefore, some questions arise:  Is it possible to extend our result to an interval $I$? What conditions have to be required on a set of curves to be able to define a function $f:I \longrightarrow \mathbb{R}$ having this set of curves as its 2-microlocal frontiers in $I$?

\section*{References}

\bibliographystyle{elsarticle-num}

\bibliography{bibliografia_art} 

\begin{thebibliography}{10}
\expandafter\ifx\csname url\endcsname\relax
  \def\url#1{\texttt{#1}}\fi
\expandafter\ifx\csname urlprefix\endcsname\relax\def\urlprefix{URL }\fi
\expandafter\ifx\csname href\endcsname\relax
  \def\href#1#2{#2} \def\path#1{#1}\fi

\bibitem{Meyer1998}
Y.~Meyer, Wavelets, Vibrations, and Scalings, CRM monograph series, American
  Mathematical Society, 1998.

\bibitem{GuiJaffardLevy1998}
B.~Guiheneuf, S.~Jaffard, J.~Lévy~Véhel, Two results concerning chirps and
  2-microlocal exponents prescription, Applied and Computational Harmonic
  Analysis 5~(4) (1998) 487--492.

\bibitem{LevySeuret2004}
J.~Lévy~Véhel, S.~Seuret, 2-microlocal formalism, in: Fractal Geometry and
  Applications: A Jubilee of Benoit Mandelbrot, Proc. Symp. Pure Math., AMS,
  Vol.~72, 2004, pp. 153--215.

\bibitem{Daoudi1998}
K.~Daoudi, J.~L{\'e}vy~V{\'e}hel, Y.~Meyer, {Construction of continuous
  functions with prescribed local regularity}, {Constructive Approximation}
  14~(3) (1998) 349--385.

\bibitem{JaffardMeyer1996}
S.~Jaffard, Y.~Meyer, Wavelet Methods for Pointwise Regularity and Local
  Oscillations of Functions, no. n.{\textordmasculine} 587 in American
  Mathematical Society: Memoirs of the American Mathematical Society, American
  Mathematical Society, 1996.

\bibitem{Jaffard1997}
S.~Jaffard, Old friends revisited: the multifractal nature of some classical
  functions, Journal of Fourier Analysis and Applications 3~(1) (1997) 1--22.

\bibitem{Kopsinis_echobat_2010}
Y.~Kopsinis, E.~Aboutanios, D.~A. Waters, S.~McLaughlin, Time-frequency and
  advanced frequency estimation techniques for the investigation of bat
  echolocation calls, The Journal of the Acoustical Society of America 127~(2)
  (2010) 1124--1134.

\bibitem{Abry2011}
P.~Abry, S.~G. Roux, S.~Jaffard, Detecting oscillating singularities in
  multifractal analysis: Application to hydrodynamic turbulence, 2011 IEEE
  International Conference on Acoustics, Speech and Signal Processing (ICASSP)
  (2011) 4328--4331.

\bibitem{AbbottLIGO2015}
B.~P. Abbott, et.al., Observation of gravitational waves from a binary black
  hole merger, Phys. Rev. Lett. 116 (2016) 061102.

\bibitem{AbbottLIGO2017}
B.~P. Abbott, et. al., Gw170817: Observation of gravitational waves from a
  binary neutron star inspiral, Phys. Rev. Lett. 119 (2017) 161101.

\bibitem{SeuretLevy2002}
S.~Seuret, J.~Lévy~Véhel, The local {H}ölder function of a continuous function,
  Applied and Computational Harmonic Analysis 13~(3) (2002) 263--276.

\bibitem{Arneodo1998}
A.~Arneodo, E.~Bacry, S.~Jaffard, J.~F. Muzy, Singularity spectrum of
  multifractal functions involving oscillating singularities, Journal of
  Fourier Analysis and Applications 4~(2) (1998) 159--174.

\bibitem{Jaffard2016}
S.~Jaffard, C.~Melot, R.~Leonarduzzi, H.~Wendt, P.~Abry, S.~Roux, M.~Torres,
  p-exponent and p-leaders, part i: Negative pointwise regularity, Physica A:
  Statistical Mechanics and its Applications 448 (2016) 300--318.

\bibitem{Bony1984}
J.~M. Bony, Second microlocalization and propagation of singularities for
  semi-linear hyperbolic equations, in: S.~Mizohata (Ed.), Hyperbolic Equations
  and Related Topics, Academic Press, 1986, pp. 11--49.

\bibitem{Jaffard1991}
S.~Jaffard, Pointwise smoothness, two microlocalization and wavelet
  coefficients, Publicacions Matemàtiques 35~(1) (1991) 155--168.

\bibitem{Kolwankar2002}
K.~Kolwankar, J.~L{\'e}vy~V{\'e}hel, A time domain characterization of the fine
  local regularity of functions, Journal of Fourier Analysis and Applications
  8~(4) (2002) 319--334.

\bibitem{Seuret2003}
S.~Seuret, J.~L{\'e}vy~V{\'e}hel, A time domain characterization of
  2-microlocal spaces, Journal of Fourier Analysis and Applications 9~(5)
  (2003) 473--495.

\bibitem{Echelard2007}
A.~Echelard, Analyse 2-microlocale et application au débruitage, Ph.D. thesis
  (2007).

\bibitem{Herbin2009}
E.~Herbin, J.~Lévy-Véhel, Stochastic 2-microlocal analysis, Stochastic
  Processes and their Applications 119~(7) (2009) 2277--2311.

\bibitem{BalancaHerbin2012}
P.~Balança, E.~Herbin, 2-microlocal analysis of martingales and stochastic
  integrals, Stochastic Processes and their Applications 122~(6) (2012)
  2346--2382.

\bibitem{Balanca2014}
P.~Balança, Fine regularity of {L}évy processes and linear (multi)fractional
  stable motion, Electron. J. Probab. 19 (2014) 37 pp.

\bibitem{Jaffard1995}
S.~Jaffard, Functions with prescribed {H}ölder exponent, Applied and
  Computational Harmonic Analysis 2~(4) (1995) 400--401.

\bibitem{Jaffard2000}
S.~Jaffard, Construction of functions with prescribed {H}ölder and chirp
  exponents 16~(2) (2000) 331--350.

\end{thebibliography}
\end{document}